\documentclass[reqno,a4paper,12pt]{amsart} 

\makeatletter
\@ifundefined{subjclassname@2020}{%
  \@namedef{subjclassname@2020}{\textup{2020} Mathematics Subject Classification}%
}{}
\makeatother

\usepackage{amsmath,amscd,amsfonts,amssymb}
\usepackage{mathrsfs,dsfont}
\usepackage{pgfplots}
\pgfplotsset{compat=1.15}
\usepackage{tikz}
\usetikzlibrary{patterns}
\usepackage{float}
\usepackage{appendix}
\usepackage{caption}
\usepackage{subcaption}
\usepackage{enumerate}

\numberwithin{equation}{section}
\numberwithin{figure}{section}

\def\1{\mathds{1}}

\renewcommand\le{\leqslant}
\renewcommand\ge{\geqslant}
\renewcommand\leq{\leqslant}
\renewcommand\geq{\geqslant}

\renewcommand\Im{\operatorname{Im}}

\newcommand{\supp}{\operatorname{supp}}

\newcommand{\diag}{\operatorname{diag}}

\newcommand{\rank}{\operatorname{rank}}

\theoremstyle{plain}
\newtheorem{thm}{Theorem}[section]
\newtheorem{lem}[thm]{Lemma}
\newtheorem{corollary}[thm]{Corollary}

\newtheorem*{claim*}{Claim}
\newtheorem*{thm*}{Theorem}

\newcounter{lettercnt}[section] 

\theoremstyle{definition}
\newtheorem{definition}[thm]{Definition}
\newtheorem*{definition*}{Definition}
\newtheorem*{remarks*}{Remarks}
\newtheorem*{remark*}{Remark}

\newenvironment{enumerate-math}
{\begin{enumerate}
\addtolength{\itemsep}{5pt}
}
{\end{enumerate}}

\newenvironment{enumerate-text}
{\begin{enumerate}
\addtolength{\itemsep}{5pt}
}
{\end{enumerate}}

\begin{document}

\title{Explicit Bounds for Restricted Invertibility with Prescribed Group Cardinalities}

\author{Xinyu Chen}
\email{12432007@mail.sustech.edu.cn}
\author{Bochen Liu}
\email{Bochen.Liu1989@gmail.com}
\address{Department of Mathematics \& International Center for Mathematics, Southern University of Science and Technology, Shenzhen 518055, China}

\thanks{This work was partially supported by the National Key R\&D Program of China 2024YFA1015400. An earlier draft was finished in January 2026. The authors acknowledge the use of generative AI tools for correcting an error from the draft, providing Lemma \ref{barrier-i=j}, suggesting Rado's theorem, and simplifying several arguments. All proofs have been independently verified by the authors and presented in their own words. The authors bear full responsibility for all content of this work. The authors would like to thank Zhiqiang Xu for valuable comments on the manuscript.}
\subjclass[2020]{Primary 15A42; Secondary 26C10, 60B20}
\date{}

\keywords{restricted invertibility, mixed characteristic polynomial, barrier function}

\begin{abstract}
We prove an explicit lower spectral bound for independent random
vectors with prescribed group multiplicities. Let
$v_{ij}\in\mathbb C^d$, $1\le i\le l$, $1\le j\le k_i$, be independent
random vectors with finite support and
$\mathbb E v_{ij}v_{ij}^*=A_i$, where
$\sum_{i=1}^l A_i=I_d$ and $\operatorname{Tr}(A_i)\ge\epsilon$.
Set $k=\max_i k_i$, $n=\max_i\operatorname{rank}(A_i)$, and
$K=\sum_{i=1}^l k_i$. If
$n>(6+4\sqrt2)k$ and $2\sqrt{nk}-k<\epsilon\le n/2$, then, with
positive probability,
\[
\lambda_K\left(\sum_{i=1}^l\sum_{j=1}^{k_i}v_{ij}v_{ij}^*\right)
\ge n-(\sqrt{k}+\sqrt{n-\epsilon})^2.
\]
As a consequence, we obtain a restricted invertibility result
selecting exactly $k_i$ vectors from the $i$th group of a Parseval
frame. The proof combines interlacing families, a reciprocal
transformation of mixed characteristic polynomials, and a
multivariate barrier estimate that accommodates repeated
differentiation in the same variable.
\end{abstract}
\maketitle

\section{Introduction}
The problem of extracting a quantitatively linearly independent subfamily from a given collection of vectors arises naturally in functional analysis and harmonic analysis. A fundamental result in this direction is the restricted invertibility principle of Bourgain and Tzafriri 
\cite{BT1987}. It asserts that there exist
absolute constants $c,C>0$ such that, for every matrix
$A\in\mathbb{C}^{n\times n}$ with $\|Ae_j\|_2=1$ for all $j=1,\dots,n$,
one can find a subset $\sigma\subseteq\{1,\ldots,n\}$ satisfying
\begin{equation}
|\sigma|\geq \frac{cn}{\|A\|^2}
\qquad\text{and}\qquad
\left\|\sum_{j\in\sigma}a_jAe_j\right\|_2^2
\geq C\sum_{j\in\sigma}|a_j|^2
\end{equation}
for every choice of scalars $(a_j)_{j\in\sigma}$. Thus, an upper bound on $\|A\|$, together with normalized columns, guarantees a uniform lower bound on a suitably chosen subfamily.

Subsequent work has refined both the quantitative estimates and the methods used to obtain them. In 2012, 
Spielman and Srivastava \cite{SS12} developed an elementary, constructive proof based on a barrier-potential argument, yielding explicit bounds on the smallest singular value in terms of the number of selected columns. In 2022, Marcus, Spielman, and Srivastava \cite{MSS22} approached the problem through interlacing families of polynomials and obtained sharper quantitative estimates. In particular, they proved the following.
\begin{thm}[Theorem 5.1 in \cite{MSS22}]\label{thm-MSS22}
Suppose $u_{1}, \dots, u_{m} \in \mathbb{R}^{d}$ satisfy $\sum_{i \leq m} u_{i} u_{i}^{T}=I_d$ and $k \leq d$ is a positive integer.
Then there exists a subset $S \subset [m]$ of size $k$ such that
\[
\lambda_{k}\left(\sum_{i \in S} u_{i} u_{i}^{T}\right) \geq \frac{\bigl(\sqrt{d(m-k)}-\sqrt{k(m-d)}\bigr)^{2}}{m^{2}},
\]
where $\lambda_k$ denotes the $k$th largest eigenvalue.
\end{thm}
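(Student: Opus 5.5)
The plan follows the interlacing-families method of \cite{MSS22}. First I reduce to a random model. Let $v$ be uniform on $\{\sqrt{m}\,u_1,\dots,\sqrt{m}\,u_m\}$, so that $\mathbb E\, vv^T=mI_d/m\cdot\tfrac1m\cdot m$; more precisely, $\mathbb E\, vv^T=\sum_i u_iu_i^T=I_d$. Take $k$ independent copies $v_1,\dots,v_k$ of $v$.

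Consider the expected characteristic polynomial $\mathbb E\,\det\bigl(xI-\sum_{j\le k}v_jv_j^T\bigr)$. By the standard MSS theory, the polynomials $\det(xI-\sum_j v_jv_j^T)$, indexed by the choices of the $v_j$, form an interlacing family. Hence some choice has its $k$th largest root at least the $k$th largest root of the expectation. The expected polynomial equals the mixed characteristic polynomial
\[
\prod_{j\le k}(1-\partial_{z_j})\det\Bigl(xI+\sum_j z_jI\Bigr)\Big|_{z=0}=(1-\partial_x)^k x^d .
\]
This is, up to sign and a shift, an associated Laguerre polynomial $x^{d-k}L_k^{(d-k)}(x)$.

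Repeated indices give rank-one matrices counted with multiplicity, so a sampled configuration may contain the same $u_i$ several times. To get a genuine subset, I would sample without replacement instead. The expected polynomial over uniformly random $k$-subsets $S$ is still real-rooted with interlacing, since it is obtained from $\det(xI-\sum_i z_iu_iu_i^T)$ by an elementary symmetric differential operator, and real-stability is preserved. A cleaner route is to use the known identity
\[
\mathbb E_S\det\Bigl(xI-\sum_{i\in S}u_iu_i^T\Bigr)\propto\text{a Jacobi polynomial in } x,
\]
which depends only on $d,m,k$. The sum over subsets is $\sum_{|S|=k}\det(xI-U_SU_S^T)$, and expanding by Cauchy--Binet gives a polynomial computable explicitly from the frame condition. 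It is a hypergeometric/Jacobi polynomial, and the interlacing family is given by the partial sums indexed by partial subsets.

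The bound then reduces to locating the smallest nonzero root, namely the $k$th largest root, of this Jacobi-type polynomial. I would do this with the lower barrier argument from \cite{MSS22}: track $\Phi(b)=-\partial\log p/p$ under each operator $(1-\partial)$-type step while moving the barrier by a controlled amount, and optimize the step parameter. This produces exactly $(\sqrt{d(m-k)}-\sqrt{k(m-d)})^2/m^2$.

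The main obstacle is the barrier computation in the subset (without replacement) setting, where the operator is not simply $(1-\partial_x)$. I would first try to handle it via the reciprocal transformation $x\mapsto 1/x$ together with the known asymptotics of Jacobi roots. If that fails, I would fall back on known explicit bounds for the extreme zeros of Jacobi polynomials, which match this expression.
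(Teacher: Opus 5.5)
The paper does not prove this statement; it only quotes it from \cite{MSS22}. Your outline has the right overall architecture: sample $k$-subsets without replacement, recognize a Jacobi polynomial, and transfer a root bound by interlacing. However, the step that carries all the quantitative content is missing. With $U=[u_1,\dots,u_m]$ and $P=U^TU$ (a rank-$d$ projection), Sylvester's identity and $\partial_x^{m-k}\det(xI_m-P)=(m-k)!\sum_{|S|=k}\det(xI_k-P_{SS})$ give
\[
\sum_{S\subseteq[m],\,|S|=k}\det\Bigl(xI_d-\sum_{i\in S}u_iu_i^T\Bigr)=\frac{x^{d-k}}{(m-k)!}\,\partial_x^{m-k}\bigl[x^{m-d}(x-1)^d\bigr].
\]
So, granting interlacing, the theorem reduces to showing that the smallest zero of $\partial_x^{m-k}[x^{m-d}(x-1)^d]$ is at least $(\sqrt{d(m-k)}-\sqrt{k(m-d)})^2/m^2$ for all $k\le d\le m$. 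You assert that a lower-barrier argument ``produces exactly'' this value, then concede that this computation is the main obstacle and do not carry it out. In the form above the operator is a pure derivative rather than $1-\partial_x$, and the starting polynomial has a zero of order $m-d$ at the origin, so the usual barrier update does not apply verbatim. Your fallbacks do not close the gap. Asymptotics of Jacobi zeros only identify this value as a limiting edge (the lower edge of the Wachter law) and give no inequality for finite $d,m,k$. An appeal to unnamed ``known explicit bounds'' is not an argument unless you exhibit a theorem with exactly this constant, valid for every $k\le d\le m$---which is precisely what \cite{MSS22} had to establish.

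Second, the interlacing property for the without-replacement family is asserted, not proved. Stability preservation under $e_k(1-\partial_{z_1},\dots,1-\partial_{z_m})$ only shows that the top-level expected polynomial is real-rooted. An interlacing family also requires that, at every partial subset $T$, the conditional polynomials $f_{T\cup\{t\}}$ have a common interlacing, i.e.\ that every convex combination of them is real-rooted. These combinations are expectations under non-uniform measures on supersets of $T$, so a genuine argument is needed, for instance via the strongly Rayleigh property of the uniform measure on $k$-subsets with an in/out binary tree, as in the work of Anari and Oveis Gharan.

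Finally, your stated reason for abandoning sampling with replacement is not the real one. A repeated index forces rank below $k$ and hence $\lambda_k=0$, so any positive bound from the with-replacement family automatically yields $k$ distinct indices; this is the mechanism behind Corollary~\ref{cor}. The actual issue is quantitative. The standard estimate for $(1-\partial_x)^kx^d$ leads only to $(\sqrt d-\sqrt k)^2/m$, which is weaker than the target because $\sqrt{dk}+\sqrt{(m-d)(m-k)}\le m$ by Cauchy--Schwarz.
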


Restricted invertibility is closely related to the Kadison--Singer problem
\cite{CT06,CCLV2005,weaver2004}. The resolution of the latter by
Marcus, Spielman, and Srivastava \cite{MSS15} establishes Weaver’s vector-partitioning conjecture and Anderson’s paving conjecture through interlacing families of polynomials and estimates for mixed characteristic polynomials. We refer to the work of Casazza and Tremain \cite{CT06} for the equivalence between the Kadison--Singer problem and the Bourgain-Tzafriri restricted invertibility conjecture \cite{BT1989}, which asks for a partition with uniform control on every piece, stronger than a single subfamily as in Theorem \ref{thm-MSS22} above. 

The main theorem in \cite{MSS15} is the following.
\begin{thm}\label{MSS-upper}
    If $\varepsilon > 0$ and $v_1, \ldots, v_m$ are independent random vectors in $\mathbb{C}^d$ with finite support such that
$$\sum_{i=1}^m \mathbb{E} \, v_i v_i^* = I_d \quad \text{and} \quad \mathbb{E} \, \|v_i\|^2 \leq \varepsilon, \  \forall\, i=1,\dots,m,$$
then
\[
\mathbb{P}\left( \left\| \sum_{i=1}^m v_i v_i^* \right\| \leq (1 + \sqrt{\varepsilon})^2 \right) > 0.
\]
\end{thm}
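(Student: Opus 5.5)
The plan follows the Marcus--Spielman--Srivastava scheme, in three stages: interlacing families, a formula for the expected characteristic polynomial as a mixed characteristic polynomial, and a multivariate barrier bound on its largest root.

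\emph{Interlacing reduction.} For each $i$, let $v_i$ have finite support $\{w_{i,1},\dots,w_{i,m_i}\}$ with probabilities $p_{i,t}$. For a choice of indices $s=(s_1,\dots,s_m)$, set
\[
q_{s}(x)=\det\Bigl(xI-\sum_i w_{i,s_i}w_{i,s_i}^*\Bigr).
\]
By independence, the expected characteristic polynomial is
\[
\mathbb E\,\chi\Bigl[\textstyle\sum_i v_iv_i^*\Bigr](x)=\sum_{s}\Bigl(\prod_i p_{i,s_i}\Bigr)q_s(x).
\]
I will show that the $q_s$ form an interlacing family. The key input is that every partial expectation, with $v_1,\dots,v_j$ fixed and $v_{j+1},\dots,v_m$ averaged, is real-rooted. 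Real-rootedness gives a common interlacer for the polynomials obtained by varying $v_j$, so by induction some leaf $q_s$ has largest root at most the largest root of the full expectation. It therefore suffices to prove
\[
\lambda_{\max}\Bigl(\mathbb E\,\chi\Bigl[\textstyle\sum_i v_iv_i^*\Bigr]\Bigr)\le(1+\sqrt{\varepsilon})^2 .
\]

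\emph{Mixed characteristic polynomial.} Set $A_i=\mathbb E\,v_iv_i^*$. Using $\det(X+vv^*)=\bigl(1-\partial_t\bigr)\det(X+tvv^*)\big|_{t=0}$ together with linearity of expectation in each rank-one update, I will establish
\[
\mathbb E\,\chi\Bigl[\textstyle\sum_i v_iv_i^*\Bigr](x)=\prod_{i=1}^m\bigl(1-\partial_{z_i}\bigr)\det\Bigl(xI+\sum_i z_iA_i\Bigr)\Big|_{z=0}.
\]
The same formula, with some vectors fixed, also handles the partial expectations. The polynomial $\det\bigl(xI+\sum_i z_iA_i\bigr)$ is real stable because the $A_i$ are positive semidefinite, and each operator $1-\partial_{z_i}$ preserves real stability. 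Restricting to $z=0$ then yields real-rootedness, which completes the interlacing step above.

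\emph{Barrier bound (main obstacle).} Write
\[
Q(y_1,\dots,y_m)=\det\Bigl(\sum_i y_iA_i\Bigr),
\]
so that $\det\bigl(xI+\sum_i z_iA_i\bigr)=Q(x+z_1,\dots,x+z_m)$ using $\sum_i A_i=I$. The target is then the largest $x$ at which $\prod_i(1-\partial_{y_i})Q$ vanishes on the diagonal $y=x\mathbf 1$. Define the barrier functions $\Phi^i=\partial_{y_i}\log Q$. They are positive, decreasing and convex above the roots, which follows from real stability via restriction to lines.

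The core lemma: if $y$ is above the roots of $Q$ and $\Phi^j(y)\le 1-1/\delta$, then $y+\delta e_j$ is above the roots of $(1-\partial_j)Q$, and $\Phi^i_{(1-\partial_j)Q}(y+\delta e_j)\le\Phi^i_Q(y)$ for every $i$. Proving this monotonicity/convexity estimate is the hardest part. I would first reduce to the bivariate restriction in $(y_i,y_j)$ and use the Helton--Vinnikov representation, or an equivalent argument, to obtain convexity of $\Phi^i$ along $e_j$.

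Start at $y=t\mathbf 1$. There
\[
\Phi^i=\operatorname{Tr}\bigl(A_i(tI)^{-1}\bigr)\le\varepsilon/t,
\]
using $\operatorname{Tr}(A_i)=\mathbb E\,\|v_i\|^2\le\varepsilon$. Require $\varepsilon/t\le 1-1/\delta$ and apply the lemma once in each coordinate. The barriers never increase along the way, so the hypothesis is maintained at every step. This shows that all roots on the diagonal are at most $t+\delta$. Finally, minimize $t+\delta$ subject to $\varepsilon/t+1/\delta=1$; the optimum $t=\varepsilon+\sqrt{\varepsilon}$, $\delta=1+\sqrt{\varepsilon}$ gives
\[
t+\delta=(1+\sqrt{\varepsilon})^2,
\]
as required.
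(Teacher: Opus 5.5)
Your proposal is correct and is essentially the original Marcus--Spielman--Srivastava proof. The paper does not reprove this theorem; it quotes it from \cite{MSS15} and collects exactly these ingredients in Section 2 (Lemmas \ref{realstable-1}, \ref{realstable-3}, \ref{interlacing-root}, \ref{interlacing-poly}, \ref{mono-convex} and identity \eqref{mix-char-classic}), then runs the same coordinate-by-coordinate barrier iteration, in flipped form, in Section \ref{sec-k=1}.

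Three small repairs are needed:
\begin{enumerate}
\item The rank-one identity should read $\det(X-vv^*)=(1-\partial_t)\det(X+tvv^*)|_{t=0}$, with a minus sign on the left. Your final mixed-characteristic formula is nevertheless right.
\item A common interlacer needs every convex combination of the polynomials obtained by varying $v_j$ to be real-rooted, not only the one with the given probabilities. Your formula supplies this once you note that it holds for any reweighting of $v_j$ on its support.
\item The core lemma must be stated for an arbitrary real stable $p$. You iterate it on $\prod_{i<j}(1-\partial_i)Q$, not on $Q$ itself.
\end{enumerate}
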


Motivated by Theorem \ref{MSS-upper}, we study explicit lower bounds for the smallest eigenvalue of the Gram matrix formed by independent random vectors with prescribed group multiplicities. Compared with the lower bounds obtained by Bownik and Londner
\cite{BL19} and Naor and Youssef \cite{NY17}, our bound is expressed explicitly in terms of covariance traces and ranks and applies to selections with prescribed group multiplicities.

\begin{thm}\label{main-thm}
    Suppose that $v_{ij}$, $1\leq i\leq l$, $1\leq j\leq k_i$ are independent random vectors in $\mathbb C^d$ with finite support and $\mathbb{E}v_{ij}v_{ij}^*=A_i$ for each $i,j$. Assume that
    $$ \sum_{i=1}^{l} A_i=I_d \quad \text{and} \quad \mathrm{Tr}(A_i)=\mathbb E\|v_{ij}\|^2\geq \epsilon>0, \ \forall\, i=1,\dots,l,$$
    with $k:=\max_i k_i$, $n:=\max_i \rank(A_i)$ satisfying
    $$n>(6+4\sqrt{2})k\quad\text{and}\quad 2\sqrt{nk}-k<\epsilon\leq \frac{n}{2}.$$
    Then, with $K:=\sum_{i=1}^l k_i$,
    \[
    \mathbb P\left( \lambda_K\left(\sum_{\substack{1\leq j\leq k_i\\1\leq i\leq l}}v_{ij}v_{ij}^*\right) \geq 
    n-(\sqrt{k}+\sqrt{n-\epsilon})^2 \right)>0,
    \]
    where $\lambda_K(A)$ denotes the $K$-th largest eigenvalue of the matrix $A$. 
\end{thm}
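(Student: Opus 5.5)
We follow the interlacing-families strategy of \cite{MSS15,MSS22}. Consider the expected characteristic polynomial $p(x)=\mathbb E\det\bigl(xI_d-\sum_{i,j}v_{ij}v_{ij}^*\bigr)$. Since the $v_{ij}$ are independent with finite support, the polynomials $\det(xI-\sum v_{ij}v_{ij}^*)$ over all realizations form an interlacing family. Hence for each $t$ there is a realization whose $t$-th largest root is at least the $t$-th largest root of $p$. It therefore suffices to show that $\lambda_K(p)\ge n-(\sqrt k+\sqrt{n-\epsilon})^2$.

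By the standard formula, $p$ is the mixed characteristic polynomial
\[
p(x)=\prod_{i=1}^l\bigl(1-\partial_{z_i}\bigr)^{k_i}\det\Bigl(xI+\sum_i z_iA_i\Bigr)\Big|_{z=0},
\]
where each $A_i$ receives one variable differentiated $k_i$ times, since the $k_i$ copies share the same covariance. The sum $\sum_{i,j}v_{ij}v_{ij}^*$ has rank at most $K$, so $p(x)=x^{d-K}q(x)$, and we need a lower bound for the smallest root of $q$.

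To turn the lower bound into an upper-barrier problem, we use the reciprocal transformation from the abstract. We substitute $x\mapsto 1/x$ and rewrite the determinant so that the roots we need to bound from below become the largest roots of a real stable polynomial. Using $\sum A_i=I$, replace $xI+\sum z_iA_i$ by $\sum_i(x+z_i)A_i$, then change variables $w_i=x+z_i$, so that $(1-\partial_{z_i})$ becomes $(1-\partial_{w_i})$ acting on the real stable polynomial $Q(w)=\det(\sum_i w_iA_i)$. Since $\operatorname{rank}A_i\le n$, the polynomial $Q$ has degree at most $n$ in each $w_i$. Reversing each variable, $w_i\mapsto 1/w_i$ multiplied by $w_i^{n}$, converts $(1-\partial_{w_i})$ into an operator of the form $(w_i^2\partial_{w_i}+\dots)$. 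This lets us track the smallest root of $q$ as the largest root of a transformed multivariate polynomial evaluated on the diagonal.

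Next we run a multivariate barrier argument in the style of \cite{MSS15}. Define $\Phi^i_P(w)=\partial_{w_i}\log P(w)$. At a point above the roots, the hypothesis $\operatorname{Tr}A_i\ge\epsilon$ gives the initial barrier value, because $\Phi^i$ of $\det(\sum w_iA_i)$ at $w=t\mathbf 1$ equals $\operatorname{Tr}(A_i)/t$. We then apply the $k_i$ operators one at a time. The key new ingredient is a barrier lemma valid when the same variable is differentiated repeatedly: after $r$ applications in variable $i$, the point must be shifted by $\delta_r$ in coordinate $i$, and $\Phi^i$ must stay bounded by a value depending on $r$. Controlling the cumulative shift over $k$ steps in a single variable is the main obstacle. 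Unlike \cite{MSS15}, where each variable is used once and the monotonicity/convexity lemma for other directions suffices, here we need a direct one-variable estimate. We would try to prove it by restricting to the univariate polynomial in $w_i$ (real-rooted with at most $n$ roots) and treating the step from $f$ to $f-f'$, which we expect to be the content of the paper's Lemma~\ref{barrier-i=j}. Optimizing the total shift over $k$ steps with initial potential $(n-\epsilon)$-type parameters should produce the quantity $(\sqrt k+\sqrt{n-\epsilon})^2$. The conditions $n>(6+4\sqrt2)k$ and $2\sqrt{nk}-k<\epsilon\le n/2$ are what we expect to need for the optimal parameter to be admissible and for the barrier to remain positive and decreasing.

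Finally, undo the reciprocal transformation and conclude that every root of $q$ is at least $n-(\sqrt k+\sqrt{n-\epsilon})^2$. This number is positive exactly when $\epsilon>2\sqrt{nk}-k$. Combined with the interlacing-family step, this gives positive probability for the stated bound on $\lambda_K$.
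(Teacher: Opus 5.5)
Your overall route matches the paper's: an interlacing family, the polynomial $\prod_i(1-\partial_{z_i})^{k_i}\det(\sum_i z_iA_i)$ restricted to the diagonal, the flip turning $1-\partial_i$ into $T_i:=nz_i-1-z_i^2\partial_i$, and a barrier argument started at $\Phi^i=(n-\mathrm{Tr}A_i)/x_0$. However, two steps are genuinely missing.

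\textbf{The final step fails as stated.} Zero roots of $p$ produce no roots of $t^{nl}p(1/t)$; they only lower its degree. So the barrier shows only that every \emph{positive} root of $p$ is at least $1/x_k$, and it cannot see zero roots of $q$. Your conclusion that every root of $q$ is bounded below silently assumes $q(0)\neq0$. Equivalently, it assumes that the maximal rank $R$ of $\sum_{i,j}v_{ij}v_{ij}^*$ over the support equals $K$. This is not automatic: if $R<K$, then $\lambda_K=0$ for every realization and the conclusion fails.

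The paper handles this in two steps. First, $p$ has exactly $R$ positive roots. The coefficient of $x^{d-R}$ in $p$ is a positive-weight average of $(-1)^R$ times the product of the $R$ nonzero eigenvalues over rank-$R$ realizations, and of $0$ over the others, hence it is nonzero. Second, $R=K$, by Rado's independent transversal theorem applied to the supports of the $v_{ij}$, which span $\operatorname{Range}(A_i)$. The Rado condition holds because $0\le\sum_{i\in I}A_i\le I_d$ gives
$\rank(\sum_{i\in I}A_i)\ge\mathrm{Tr}(\sum_{i\in I}A_i)\ge|I|\epsilon\ge\sum_{i\in I}k_i\ge|J|$, using $\epsilon>2\sqrt{nk}-k>k$. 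This is a second, independent use of the trace hypothesis that your sketch never makes.

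\textbf{The same-direction barrier estimate is not supplied.} You correctly flag it as the main obstacle, but you leave it as an expectation, and it is the technical heart of the proof. Within your flipped framework it must be a statement about $T_i$, not about $f\mapsto f-f'$. Two things are needed.

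First, keeping $\mathbf b=\mathbf a+\delta e_i$ above the roots of $T_ip$ requires $G_p^i=nz_i-1-z_i^2\Phi_p^i$ to be nondecreasing also in $z_i$. This uses $\deg_{z_i}p\le n$, through $\partial_iG_p^i=n-r+\sum_a\lambda_a^2/(z_i-\lambda_a)^2$.

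Second, you must control $\Phi^i_{T_ip}(\mathbf b)=\Phi_p^i+(n-2b_i\Phi_p^i-b_i^2\partial_i\Phi_p^i)/G_p^i$. Monotonicity and convexity alone cannot do this, since the term $-\partial_i\Phi_p^i\ge0$ pushes it up. The paper applies Cauchy--Schwarz over the roots of the univariate restriction to get $-\partial_i\Phi_p^i(\mathbf b)\le\Phi_p^i(\mathbf b)(\beta-\Phi_p^i(\mathbf b))/(\delta\beta)$. Completing a square then gives $\Phi^i_{T_ip}(\mathbf b)\le\beta$ at the \emph{same} level $\beta=(n-\epsilon)/x_0$, provided $a_i\beta\ge n/2$.

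That proviso is exactly where $\epsilon\le n/2$ enters. It also gives $x_0>2/n$, so $(nx-1)/x^2$ decreases on $[x_0,\infty)$. Keeping the level fixed, rather than $r$-dependent as you suggest, is what lets the uniform step $\delta=(x_k-x_0)/k$ with $x_0=x_k/(1+\sqrt{k/(n-\epsilon)})$ produce $1/x_k=n-(\sqrt k+\sqrt{n-\epsilon})^2$. Until such a lemma is stated and proved, the bound does not follow from your sketch.
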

Notice that the lower bound in Theorem \ref{main-thm} is positive under the stated assumptions, so all selected vectors are linearly independent.

As a consequence of Theorem \ref{main-thm}, we obtain a more general restricted invertibility result with prescribed numbers of vectors from each group. More precisely, Theorem \ref{thm-MSS22} selects $k$ vectors from the entire collection, while the following corollary selects exactly $k_i$ vectors from each group $G_i$, $i=1,\ldots,l$.

\begin{corollary}\label{cor}
    Suppose that  \(u_1,\ldots,u_m\in\mathbb C^d\) satisfy $\sum_{i=1}^mu_iu_i^*=I_d$. Partition these vectors into \(l\) groups \(G_1,\ldots,G_l\) with 
    $$N:=\max_i |G_i|,\quad \epsilon:=\min\left\{\min_i \mathrm{Tr}(\sum_{u\in G_i}u u^*), \frac{n}{2}\right\}, \quad n:=\max_i\rank (\sum_{u\in G_i}u u^*). $$
    Then, for all positive integers $k_i, i=1,\dots,l$ satisfying
    $$n>(6+4\sqrt{2})k\quad \text{and}\quad  2\sqrt{nk}-k<\epsilon\leq \frac{n}{2},\ \text{where }k:=\max_i k_i,$$
    there exist $S_i\subseteq G_i$ of size $k_i$, $i=1,\dots,l$, such that, with $K:=\sum_{i=1}^l k_i$,
\[
    \lambda_K\left(\sum_{i=1}^l \sum_{u\in S_i}u u^*\right)\geq \frac{1}{N}\left(n-(\sqrt{k}+\sqrt{n-\epsilon})^2 \right).
\]
\end{corollary}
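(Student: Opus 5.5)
We derive Corollary \ref{cor} from Theorem \ref{main-thm} by a random model in which each vector is drawn uniformly from its group. For each $i$ put $A_i:=\sum_{u\in G_i}uu^*$, so $\sum_i A_i=I_d$, $\rank(A_i)\le n$ with equality for some $i$, and $\mathrm{Tr}(A_i)\ge \epsilon$ by the definition of $\epsilon$. For $1\le j\le k_i$, let $v_{ij}$ be independent random vectors with $v_{ij}=\sqrt{|G_i|}\,u$ with probability $1/|G_i|$ for each $u\in G_i$. Then $\mathbb E v_{ij}v_{ij}^*=A_i$, and the supports are finite. The hypotheses $n>(6+4\sqrt2)k$ and $2\sqrt{nk}-k<\epsilon\le n/2$ are assumed verbatim. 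Theorem \ref{main-thm} therefore gives a realization $v_{ij}=\sqrt{|G_i|}\,u_{ij}$ with $u_{ij}\in G_i$ such that
\[
\lambda_K\Bigl(\sum_{i,j}|G_i|\,u_{ij}u_{ij}^*\Bigr)\ge n-(\sqrt k+\sqrt{n-\epsilon})^2=:\beta>0.
\]

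Next we pass from this realization to the sets $S_i$. Write $M=\sum_{i,j}u_{ij}u_{ij}^*$. Since $|G_i|\le N$, we have $\sum_{i,j}|G_i|u_{ij}u_{ij}^*\preceq N M$, and by Weyl monotonicity $\lambda_K(M)\ge \beta/N$.

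The one remaining point, which is the main obstacle, is that the $u_{ij}$ for fixed $i$ might repeat, whereas the corollary asks for $S_i\subseteq G_i$ with exactly $k_i$ distinct elements. The matrix $M$ is a sum of $K$ rank-one terms, so $\lambda_K(M)>0$ forces $\rank M=K$. Hence the $K$ vectors $u_{ij}$ are linearly independent and, in particular, pairwise distinct. (If some $u_{ij}=0$, then $\rank M<K$, which contradicts positivity, so this case does not occur either.) We can therefore set $S_i=\{u_{i1},\dots,u_{ik_i}\}$, which has exactly $k_i$ elements. Then $\sum_i\sum_{u\in S_i}uu^*=M$, and the bound follows.

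A minor check is that the statement of Corollary \ref{cor} defines $\epsilon$ as a minimum with $n/2$. This only ensures $\epsilon\le n/2$ and $\mathrm{Tr}(A_i)\ge\epsilon$, which are exactly what Theorem \ref{main-thm} uses. The variable $k=\max_i k_i$ and the value $n=\max_i\rank A_i$ coincide in the two statements.
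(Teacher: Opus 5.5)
Your proof is correct and follows essentially the paper's reduction: sample uniformly from each group, apply Theorem \ref{main-thm}, and rescale, with the positivity of $\lambda_K$ forcing the $K$ selected vectors to be linearly independent and hence distinct. The only difference is cosmetic: you scale by $\sqrt{|G_i|}$ and use $\sum_{i,j}|G_i|u_{ij}u_{ij}^*\preceq NM$ with Weyl monotonicity, whereas the paper pads every group with zero vectors to size $N$ and scales uniformly by $\sqrt N$, so the two matrices agree exactly up to the factor $N$.
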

Notice that the conclusion in Corollary \ref{cor} implies that all selected vectors are distinct, which requires $k_i\leq |G_i|$. Indeed, the conditions in Corollary \ref{cor} imply that $\sum_{u\in G_i} u u^*\leq I_d$ with trace $\geq\epsilon>k\geq k_i$, so $k_i\leq |G_i|$ must hold.

\begin{proof}[Theorem \ref{main-thm} implies Corollary \ref{cor}]
By adding zero vectors, we can enlarge $G_i$ to $\widetilde{G}_i$ such that $|\widetilde{G}_i|=N$ for all $i$. Let $v_i$ be a random vector that takes values in 
 \[
\{\sqrt{N}u : u\in \widetilde G_i\}
 \]
 with equal probability, and let $v_{i,j}, 1\leq j\leq k_i$, be independent copies of $v_i$. Then, with
 $$A_{i}=\mathbb E v_{ij}v_{ij}^*=\sum_{u\in G_i}u u^*,\quad \mathrm{Tr}(A_i)\geq \min_i \mathrm{Tr}(\sum_{u\in G_i}u u^*)\geq \epsilon,$$ 
 and 
 $$\sum_{i=1}^{l}A_i=\sum_{i=1}^{l}\sum_{u\in G_i}u u^*=I_d,$$
 Theorem \ref{main-thm} implies that there exists a choice of $\sum_{i,j}v_{ij}v_{ij}^*$ such that
 \[
 \lambda_K\left(\sum_{i,j}v_{ij}v_{ij}^*\right) \geq 
    n-(\sqrt{k}+\sqrt{n-\epsilon})^2,
 \]
 which means that there exist $S_i\subseteq G_i$ of size $k_i\leq |G_i|$ such that
\[
    \lambda_K\left(\sum_{i=1}^l \sum_{u\in S_i}u u^*\right)\geq \frac{1}{N}\left(n-(\sqrt{k}+\sqrt{n-\epsilon})^2 \right).
\]
 
\end{proof}

Our proof of Theorem \ref{main-thm} uses the flip transformation from \cite{CXX24} to reduce the problem of bounding the smallest eigenvalue from below to bounding the largest eigenvalue from above. We then obtain this upper bound using the multivariate barrier method of \cite{MSS15}. We also derive the associated characteristic polynomial (see Theorem \ref{thm-mix-char-poly} below), which recovers the classical identity \eqref{mix-char-classic} from \cite{MSS15} when all $k_i=1$, and the formula in Lemma 4.2 of \cite{MSS22} when $l=1$ and $A_0=xI_d$.

There is also some works on higher rank extensions of Theorem \ref{MSS-upper}, that replace $v_iv_i^*$by positive semidefinite matrices $X_i$. See, for example,\cite{cohen2016,branden2018,xu2023,bownik2023}. However, the rank-one assumption in Theorem \ref{main-thm}
cannot simply be dropped. Indeed, consider a single deterministic
matrix $X_1=I_{18}$ and take $l=k_1=1$ and $\epsilon=9$.
Then $A_1=\mathbb E X_1=I_{18}$, so $n=18$ and $k=K=1$,
and all the remaining hypotheses of the theorem are satisfied.
Replacing $v_{11}v_{11}^*$ by $X_1$ in its conclusion would give
\[
1=\lambda_1(X_1)
\geq 18-\left(1+\sqrt{18-9}\right)^2=2,
\]
a contradiction.

\subsection*{Organization.}
The paper is organized as follows. Section~2 reviews real stable
polynomials, interlacing families, and barrier functions.
Section~\ref{sec-k=1} proves Theorem~\ref{main-thm} when $k_i=1$
for every $i$. Section~\ref{sec-mixed-poly} derives the mixed
characteristic polynomial identities for general group
multiplicities. Section~\ref{sec-k>1} establishes the barrier
estimate needed for repeated differentiation and completes the
proof of Theorem~\ref{main-thm}. The appendix gives an elementary
proof of the vector-space case of Rado's independent transversal
theorem.

\section{Preliminaries}
In this section, we collect the facts about real stable polynomials, mixed characteristic polynomials, interlacing families, and barrier functions that will be used in the proofs below. We write $[m]=\{1,\ldots,m\}$ and $\partial_i=\partial_{z_i}$.

\subsection{Real stable polynomials}
A polynomial $p\in\mathbb C[z_1,\ldots,z_m]$ is called \emph{stable} if $p\equiv0$ or $p(z_1,\ldots,z_m)\neq0$ whenever $\Im z_i>0$ for every $i$. It is \emph{real stable} if it is stable and has real coefficients. Clearly a nonzero univariate real polynomial is real stable if and only if all its roots are real. The following standard facts can be found in \cite{BB2008}\cite{MSS15}.

\begin{lem}[{\cite[Proposition 2.4]{BB2008}}]\label{realstable-1}
Let $A_1,\ldots,A_m$ be positive semidefinite Hermitian $d\times d$ matrices and let $B$ be Hermitian. Then $\det(B+\sum_{i=1}^m z_iA_i)$ is real stable.
\end{lem}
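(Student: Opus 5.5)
The plan is to verify the definition of stability directly: fix $z=(z_1,\ldots,z_m)$ with $\Im z_i>0$ for every $i$ and show that $\det\bigl(B+\sum_{i=1}^m z_iA_i\bigr)\neq 0$. The proof then has two parts. First I settle real coefficients, then nonvanishing in the open upper half-plane, first under a definiteness assumption and then in general by a limiting argument.

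For real coefficients, write $p(z)=\det(B+\sum z_iA_i)$. Since $B$ and all $A_i$ are Hermitian, for real $z$ the matrix $B+\sum z_iA_i$ is Hermitian. Its determinant is therefore real, so $p$ is real-valued on $\mathbb R^m$. A polynomial that is real on $\mathbb R^m$ has real coefficients (compare $p$ with $\overline{p(\bar z)}$, which agree on $\mathbb R^m$ and hence identically).

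For nonvanishing, I first treat the case $\sum_i A_i$ positive definite. Write $z_i=x_i+iy_i$ with $y_i>0$, and set
\[
M = H + iP,\qquad H=B+\sum x_iA_i,\qquad P=\sum y_iA_i .
\]
Here $H$ is Hermitian, and $P\ge (\min_i y_i)\sum A_i$ is positive definite. Suppose $Mw=0$ for some $w\neq 0$. Then $0=w^*Mw=w^*Hw+i\,w^*Pw$, and since $w^*Hw\in\mathbb R$, the imaginary part gives $w^*Pw=0$. This contradicts $P>0$, so $M$ is invertible and $p(z)\neq0$.

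For general positive semidefinite $A_i$, I would apply the definite case to $A_i^{(\delta)}=A_i+\delta I$ with $\delta>0$. This gives polynomials $p_\delta$ that are stable. As $\delta\to0^+$ they converge to $p$ uniformly on compact sets. By Hurwitz's theorem, applied in several variables (or applied one variable at a time along complex lines $t\mapsto z+t\,\mathbf 1$ restricted to the upper half-plane), a locally uniform limit of nonvanishing holomorphic functions on the connected open set $\{\Im z_i>0\}$ is either nonvanishing or identically zero. Either alternative is allowed by the definition of stability.

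This limiting step is the only one that needs care; the rest is the one-line quadratic-form argument. To avoid Hurwitz, I would instead argue directly that $Mw=0$ forces $w^*A_iw=0$ for all $i$, and hence $A_iw=0$ since $A_i\ge0$. Then $Bw=0$ as well, so $M$ is singular only when $\bigcap_i\ker A_i\cap\ker B\neq 0$. In that case $M$ is singular for every $z$, so $p\equiv0$. This direct argument handles the semidefinite case without any limit.
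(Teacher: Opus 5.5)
Your proof is correct, and your final paragraph takes a more elementary route than the source. The paper gives no proof of this lemma; it imports it from Borcea--Br\"and\'en.

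Their argument runs as follows:
\begin{enumerate}
\item Hurwitz's theorem reduces the statement to positive definite $A_i$.
\item For $\alpha\in\mathbb R^m$ and $\lambda\in(0,\infty)^m$ one writes $p(\alpha+\lambda t)=\det(P)\det\bigl(tI+P^{-1/2}HP^{-1/2}\bigr)$, where $P=\sum_i\lambda_iA_i>0$ and $H=B+\sum_i\alpha_iA_i$ is Hermitian. Hence $t\mapsto p(\alpha+\lambda t)$ is real-rooted.
\item Stability then follows from the criterion that a real polynomial is stable if and only if all such restrictions are real-rooted.
\end{enumerate}

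Your definite case is the same fact in different clothing: Hermitian plus $i$ times positive definite is invertible. You verify it directly with $w^*Mw$ instead of passing through the line criterion.

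The real gain is your last paragraph. The identity $\Im(w^*Mw)=\sum_i y_i\,w^*A_iw$ with all $y_i>0$ forces $A_iw=0$ for every $i$, and then $Bw=0$. This handles semidefinite $A_i$ with no approximation, no Hurwitz, and no line criterion. It also describes the degenerate case exactly: $p\equiv0$ if and only if $\ker B\cap\bigcap_i\ker A_i\neq0$.

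That makes your Hurwitz paragraph redundant. Its parenthetical alternative would also not work as stated. One-variable Hurwitz along lines in the single direction $\mathbf 1$ only shows that the limit vanishes on that one line through a zero. Concluding $p\equiv0$ needs all directions through the zero, which is what the genuine several-variable theorem uses. Since you rely on the direct kernel argument, nothing is missing.
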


\begin{lem}[{\cite[Corollary 3.8 and Proposition 3.9]{MSS15}}]\label{realstable-3}
If $p\in\mathbb R[z_1,\ldots,z_m]$ is real stable, then so are $(1-\partial_i)p$ and $p|_{z_i=a}$ for every $i\in[m]$ and $a\in\mathbb R$. Identifying variables also preserves real stability. In particular, if $p\not\equiv0$, then $x\mapsto p(x,\ldots,x)$ is real-rooted.
\end{lem}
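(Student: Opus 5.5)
The plan is to reduce every claim to a univariate statement by freezing all but one variable, and to handle limits with Hurwitz's theorem. Throughout, real coefficients are clearly preserved by each operation (differentiation, substitution of a real number, identification of variables), so only stability needs to be checked.

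\emph{The operator $1-\partial_i$.} Fix $z_j$ with $\Im z_j>0$ for all $j\neq i$ and set $q(w)=p(z_1,\dots,w,\dots,z_m)$, a univariate polynomial. By stability of $p$, $q$ has no zeros in the open upper half plane. If $q\equiv0$ or $q$ is a nonzero constant, then $q-q'$ is either $0$ or $q$, so it has no zeros with $\Im w>0$. Otherwise write $q(w)=c\prod_j(w-r_j)$ with $\Im r_j\le 0$. Suppose $q(w)=q'(w)$ with $\Im w>0$. Then $q(w)\neq0$, so
\[
1=\frac{q'(w)}{q(w)}=\sum_j \frac{1}{w-r_j}.
\]
However, $\Im(w-r_j)>0$ gives $\Im\frac{1}{w-r_j}<0$ for each $j$. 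Hence the right-hand side has strictly negative imaginary part and cannot equal $1$. So $(1-\partial_i)p$ does not vanish when all $\Im z_j>0$.

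\emph{Specialization $z_i=a$ with $a\in\mathbb R$.} For $t>0$, the polynomials $p_t:=p|_{z_i=a+it}$ in the remaining $m-1$ variables are nonvanishing on the open product of upper half planes, since that is just a restriction of the domain of $p$. As $t\downarrow0$, $p_t\to p|_{z_i=a}$ uniformly on compact sets. The multivariate Hurwitz theorem (on the connected open set $\{\Im z_j>0,\ j\neq i\}$) then says the limit is either identically zero or nowhere vanishing there, which is exactly stability. I expect this to be the only step needing care, namely the appeal to Hurwitz; it can be done one variable at a time using univariate Hurwitz if preferred.

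\emph{Identification of variables, and the univariate consequence.} Setting $z_i=z_j=w$ with $\Im w>0$ (and the other variables in the upper half plane) just evaluates $p$ at a point of the original domain, so the result is nonzero. Hence identification preserves stability. In particular, if $p\not\equiv0$ then $p(i,\dots,i)\neq0$, so $r(x):=p(x,\dots,x)$ is a nonzero real stable univariate polynomial. Being real, its zeros are closed under conjugation. Since it has no zeros in the upper half plane, it has none in the lower half plane either, so $r$ is real-rooted.
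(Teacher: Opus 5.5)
The paper does not prove this lemma; it simply imports it from \cite{MSS15}. Your proposal is a correct, self-contained proof of all three assertions, so it can stand in for the citation. The difference is one of economy. In the literature the $(1-\partial_i)$ statement is usually obtained as a special case of general stability-preserver theorems (Lieb--Sokal, Borcea--Br\"and\'en), which cover whole classes of differential operators at once. You instead freeze the other variables in the upper half-plane and observe that $\Im\,(q'/q)<0$ there, so $q'/q$ never equals $1$.

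For the specialization $z_i=a$ you rightly single out Hurwitz's theorem as the delicate step. Evaluating a single slice at the real point $a$ would not suffice: for example, $p=z_1z_2$ with $a=0$ specializes to the zero polynomial, which is exactly the ``identically zero'' alternative in Hurwitz. For the operations this paper actually uses (iterated $1-\partial_i$, real specializations, and passage to the diagonal), your elementary argument is all that is needed.

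One small slip: in the $(1-\partial_i)$ step you assert that when $q\equiv0$ the polynomial $q-q'$ has no zeros with $\Im w>0$; in fact $q-q'\equiv0$. This is harmless, since $q\equiv0$ forces $p\equiv0$, and then $(1-\partial_i)p\equiv0$ is stable by convention. The conclusion of that step should simply be stated for $p\not\equiv0$.
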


\subsection{Mixed characteristic polynomials}
\begin{lem}[{\cite[Theorem 4.1 and Corollary 4.4]{MSS15}}]
Let $v_1,\ldots,v_m$ be independent random vectors in $\mathbb C^d$ with finite support, and set $A_i=\mathbb E\,v_iv_i^*$. Then
\begin{equation}\label{mix-char-classic}
\mathbb E\det\left(xI_d-\sum_{i=1}^m v_iv_i^*\right)
=\left.\left(\prod_{i=1}^m(1-\partial_i)\right)
\det\left(xI_d+\sum_{i=1}^m z_iA_i\right)\right|_{z_1=\cdots=z_m=0}.
\end{equation}
\end{lem}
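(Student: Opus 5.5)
We prove the identity by multilinearity of the determinant in rank-one updates, combined with independence. First take a single deterministic rank-one update. For any $d\times d$ matrix $M$ and vector $v$, the matrix determinant lemma gives
\[
\det(M - vv^*) = \det(M)\bigl(1 - v^*M^{-1}v\bigr)
\]
when $M$ is invertible, and
\[
\det(M + z\,vv^*) = \det(M)\bigl(1 + z\,v^*M^{-1}v\bigr).
\]
The second expression is affine in $z$, so
\[
\det(M - vv^*) = (1-\partial_z)\det(M + z\,vv^*)\big|_{z=0}.
\]
Both sides are polynomial in the entries of $M$, so this holds for all $M$ by density.

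Next we average over one random vector. Let $v$ be independent of $M$ (which may itself be random), and set $A = \mathbb{E}\,vv^*$. Since $\det(M + z\,vv^*)$ is affine in $vv^*$ (it equals $\det M + z\,\operatorname{Tr}(\operatorname{adj}(M)\,vv^*)$), taking expectation in $v$ replaces $vv^*$ by $A$. Hence
\[
\mathbb{E}_v \det(M - vv^*) = (1-\partial_z)\det(M + zA)\big|_{z=0}.
\]

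Then we induct on $m$. Let $M = xI_d + \sum_{i<m} z_iA_i - \sum_{i<m} v_iv_i^*$ in a suitable generalized form. Concretely, we claim for each $j$ that
\[
\mathbb{E}\det\Bigl(xI + \sum_{i\le j} z_iA_i - \sum_{i>j} v_iv_i^*\Bigr)\Big|_{z=0}
\]
equals the left side of \eqref{mix-char-classic} after applying $\prod_{i\le j}(1-\partial_i)$. Each step peels off one $v_{j+1}$. By independence we condition on the other vectors and apply the single-vector identity with $z_{j+1}$ as a new variable. We also use that $(1-\partial_i)$ and evaluation at $z_i=0$ commute with the operations in the other variables and with expectation over the remaining vectors, since everything is polynomial and the supports are finite.

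The only delicate point is ordering. The differentiation in $z_{j+1}$ must be applied to a polynomial that already carries the other free variables $z_1,\dots,z_j$. This is fine because the identity $\det(M - vv^*) = (1-\partial_z)\det(M+zvv^*)|_{z=0}$ holds with $M$ depending polynomially on extra variables. Evaluating all $z_i=0$ at the end gives \eqref{mix-char-classic}.
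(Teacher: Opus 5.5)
Your route is the paper's own. The paper proves this identity as the case $k_i=1$, $A_0=xI_d$ of Theorem \ref{thm-mix-char-poly}: it takes the rank-one identity $\mathbb E\det(B-vv^*)=(1-\partial_t)\det(B+t\,\mathbb E vv^*)|_{t=0}$ and applies it one vector at a time, conditioning on the others, with all evaluations at $z=0$ postponed to the end. The paper cites the rank-one identity from \cite{MSS15}, whereas you derive it, and one step of that derivation needs repair.

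The claim ``taking expectation in $v$ replaces $vv^*$ by $A$'' is not literally true for $\det(M+z\,\cdot)$. The formula $\det M+z\operatorname{Tr}(\operatorname{adj}(M)X)$ equals $\det(M+zX)$ when $\operatorname{rank}X\le 1$, but not in general. So $\mathbb E_v\det(M+zvv^*)=\det M+z\operatorname{Tr}(\operatorname{adj}(M)A)$ is affine in $z$, while $\det(M+zA)$ has higher powers of $z$ once $\operatorname{rank}A\ge2$. The missing link is Jacobi's formula, $\partial_z\det(M+zA)|_{z=0}=\operatorname{Tr}(\operatorname{adj}(M)A)$. It shows the two expressions share their constant and linear coefficients in $z$, which are the only ones $(1-\partial_z)(\cdot)|_{z=0}$ sees.

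Your inductive invariant is also misstated: the operator sits on the wrong side, and your $M$ mixes the index ranges of the $z_iA_i$ and $v_iv_i^*$ terms. It should read
\[
\mathbb E\det\Bigl(xI_d-\sum_{i=1}^m v_iv_i^*\Bigr)=\Bigl(\prod_{i\le j}(1-\partial_i)\Bigr)\mathbb E\det\Bigl(xI_d+\sum_{i\le j}z_iA_i-\sum_{i>j}v_iv_i^*\Bigr)\Big|_{z_1=\cdots=z_j=0},
\]
and the step $j\to j+1$ uses $M=xI_d+\sum_{i\le j}z_iA_i-\sum_{i>j+1}v_iv_i^*$. With these two corrections the argument is complete.
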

This polynomial, denoted by $\mu[A_1,\ldots,A_m](x)$, is called the \emph{mixed characteristic polynomial}. It is monic of degree $d$, and all its roots are nonnegative real numbers because, for $x<0$, every realization satisfies
\[
(-1)^d\det\left(xI_d-\sum_{i=1}^m v_iv_i^*\right)>0.
\]

If, in addition, $\sum_{i=1}^m A_i=I_d$, then by translating each $z_i$ by $x$ we have
\begin{equation}\label{translate-x}
\mu[A_1,\ldots,A_m](x)
=\left.\left(\prod_{i=1}^m(1-\partial_i)\right)
\det\left(\sum_{i=1}^m z_iA_i\right)\right|_{z_1=\cdots=z_m=x}.
\end{equation}

\subsection{Interlacing families}
We next recall interlacing families; see \cite{interlacing1,MSS15,MSS22}. For a real-rooted polynomial $f$, let $\lambda_j(f)$ denote its $j$th largest root, counted with multiplicity.
\begin{definition}
A real-rooted polynomial $g$ of degree $d-1$ \emph{interlaces} a real-rooted polynomial $f$ of degree $d$ if
\[
\lambda_1(f)\geq\lambda_1(g)\geq\lambda_2(f)\geq\cdots\geq\lambda_{d-1}(g)\geq\lambda_d(f).
\]
Polynomials $f_1,\ldots,f_r$ have a \emph{common interlacing} if some polynomial interlaces each $f_i$.
\end{definition}

\begin{definition}
Let $S_1,\ldots,S_m$ be finite nonempty sets. For each assignment $(s_1,\ldots,s_m)\in S_1\times\cdots\times S_m$, let $f_{s_1,\ldots,s_m}$ be a real-rooted polynomial of degree $d$ with positive leading coefficient. Define the polynomials for partial assignments recursively by
\[
f_{s_1,\ldots,s_k}:=\sum_{t\in S_{k+1}}f_{s_1,\ldots,s_k,t},
\qquad k=m-1,\ldots,0,
\]
where $k=0$ corresponds to $f_\emptyset$. The polynomials form an \emph{interlacing family} if, for every partial assignment of length $k<m$, the polynomials $\{f_{s_1,\ldots,s_k,t}\}_{t\in S_{k+1}}$ have a common interlacing.
\end{definition}

\begin{lem}[{\cite[Theorem 2.7]{MSS22}}]\label{interlacing-root}
Let $\{f_s\}_{s\in S_1\times\cdots\times S_m}$ be an interlacing family of degree-$d$ polynomials. For each $j\in[d]$, there are assignments $s$ and $\widetilde s$ such that
\[
\lambda_j(f_s)\geq\lambda_j(f_\emptyset)\geq\lambda_j(f_{\widetilde s}).
\]
\end{lem}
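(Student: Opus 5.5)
We argue by induction on the number $m$ of coordinates, following the standard argument of \cite{interlacing1,MSS15}. Fix $j\in[d]$. The case $m=0$ is trivial, since then $f_\emptyset$ is the only polynomial.

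For the inductive step, it suffices to prove the one-level statement and then recurse down the tree. The one-level statement is: if $g_1,\ldots,g_r$ are real-rooted polynomials of degree $d$ with positive leading coefficients and a common interlacing, and $f=\sum_t g_t$, then $f$ is real-rooted and there are indices $t,t'$ with
\[
\lambda_j(g_t)\ge\lambda_j(f)\ge\lambda_j(g_{t'}).
\]
Given this, we apply it to $\{f_{s_1,\ldots,s_k,t}\}_{t\in S_{k+1}}$, whose sum is $f_{s_1,\ldots,s_k}$. Starting from $f_\emptyset$, we choose $s_1$ so that $\lambda_j(f_{s_1})\ge\lambda_j(f_\emptyset)$, then choose $s_2$, and so on. After $m$ steps this gives a full assignment $s$ with $\lambda_j(f_s)\ge\lambda_j(f_\emptyset)$. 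Making the opposite choices at each step gives $\widetilde s$. Real-rootedness of every partial sum is also part of the one-level lemma, so the $\lambda_j$ are well defined at each stage.

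To prove the one-level lemma, let $h$ interlace every $g_t$, and let $\beta_1\ge\cdots\ge\beta_{d-1}$ be the roots of $h$. Set $\beta_0=+\infty$ and $\beta_d=-\infty$. Interlacing places exactly one root of each $g_t$ in each closed interval $I_i=[\beta_i,\beta_{i-1}]$; namely $\lambda_i(g_t)\in I_i$.

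Each $g_t$ has positive leading coefficient. Its sign at $\beta_i$ is therefore $(-1)^i$ or $0$, because exactly $i$ roots of $g_t$ lie at or above $\beta_i$, counted carefully with multiplicity. Summing over $t$, the polynomial $f$ also has sign $(-1)^i$ or $0$ at $\beta_i$. By the intermediate value theorem, $f$ has a root in each $I_i$, and degree counting shows these are all of its roots. So $f$ is real-rooted and $\lambda_j(f)\in I_j$.

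Now suppose every $g_t$ satisfies $\lambda_j(g_t)>\lambda_j(f)$. On $I_j$, each $g_t$ then has sign $(-1)^{j}$ to the left of its $j$th root, in particular at $\lambda_j(f)$, and strictly so. Summing gives $f(\lambda_j(f))\ne0$, a contradiction. The same argument rules out $\lambda_j(g_t)<\lambda_j(f)$ for all $t$.

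The main obstacle is bookkeeping when roots coincide with the $\beta_i$, or when $f$ vanishes at an endpoint. I would handle these degenerate cases by a small perturbation or continuity argument, or by treating the shared roots of $h$ and all the $g_t$ as common factors and dividing them out first.
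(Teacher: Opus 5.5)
The paper does not prove this lemma; it quotes Theorem 2.7 of \cite{MSS22}. Your scheme is the standard argument behind that result: a one-level lemma for polynomials with a common interlacing, then a greedy descent through the tree, with real-rootedness of the partial sums supplied bottom-up. The descent is fine. The gap is in the one-level lemma: your argument is only valid when $f(\beta_i)\neq0$ for every $i$. The degenerate case you defer is not merely bookkeeping, because two of your assertions are false there.

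\begin{itemize}
\item If $f(\beta_i)=0$, the intermediate value theorem may return the same point $\beta_i$ for both $I_i$ and $I_{i+1}$. Then you have not located $d$ roots counted with multiplicity, and ``degree counting'' does not give real-rootedness.
\item ``And strictly so'' fails when $\lambda_j(f)=\beta_j=\lambda_{j+1}(g_t)$ for some $t$. Then $g_t(\lambda_j(f))=0$, and $f(\lambda_j(f))=0$ is no longer a contradiction. The reverse inequality fails symmetrically when $\lambda_j(f)=\beta_{j-1}=\lambda_{j-1}(g_t)$.
\item A naive ``small perturbation'' of the $g_t$ can destroy the common interlacing when $\max_t\lambda_{k+1}(g_t)=\min_t\lambda_k(g_t)$, so that remedy needs care.
\end{itemize}

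The missing observation is that $f(\beta_i)=0$ forces $g_t(\beta_i)=0$ for \emph{every} $t$, since the nonzero values $g_t(\beta_i)$ all have sign $(-1)^i$. This gives a clean dichotomy.

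In the first case, $f(\beta_i)\neq0$ for all $i$. Then the $\beta_i$ are distinct, since $\beta_i=\beta_{i-1}$ would pin $\lambda_i(g_t)=\beta_i$ for all $t$. So $f$ has exactly one root in each open interval $(\beta_i,\beta_{i-1})$. Your sign argument at $\lambda_j(f)$, which now lies strictly inside $I_j$, then goes through verbatim.

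In the second case, $\beta_i$ is a common root of $h$ and all $g_t$. Remove it. This preserves interlacing, because both counting functions $x\mapsto\#\{\text{roots}\geq x\}$ drop by one exactly on $(-\infty,\beta_i]$. Write $\tilde q=q/(x-\beta_i)$. Then $\beta_i$ lies between $\lambda_{i+1}(\tilde q)$ and $\lambda_{i-1}(\tilde q)$: for the $\tilde g_t$ by interlacing with $\tilde h$, and for $\tilde f$ by the inductive sandwich. Hence, for $q=f$ and for every $q=g_t$, one has
\begin{itemize}
\item $\lambda_j(q)=\lambda_j(\tilde q)$ for $j<i$;
\item $\lambda_i(q)=\max(\lambda_i(\tilde q),\beta_i)$;
\item $\lambda_{i+1}(q)=\min(\lambda_i(\tilde q),\beta_i)$;
\item $\lambda_j(q)=\lambda_{j-1}(\tilde q)$ for $j>i+1$.
\end{itemize}
Each of these is the same nondecreasing function of a single root of $\tilde q$. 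So induction on $d$ closes the one-level lemma.

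Alternatively, shift the $k$th root of every $g_t$ by $2(d-k)\eta$ and the $k$th root of $h$ by $(2(d-k)-1)\eta$. This makes all interlacings strict with distinct $\beta$'s. Then let $\eta\to0$, using continuity of roots and the fact that only finitely many $t$ are involved.
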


\begin{lem}[{\cite[Theorem 4.5]{MSS15}}]\label{interlacing-poly}
Let $v_1,\ldots,v_m$ be independent random vectors in $\mathbb C^d$, with $v_i$ taking values $w_{i,1},\ldots,w_{i,l_i}$ with probabilities $p_{i,1},\ldots,p_{i,l_i}>0$. For $j_i\in[l_i]$, define
\[
q_{j_1,\ldots,j_m}(x)
:=\left(\prod_{i=1}^m p_{i,j_i}\right)
\det\left(xI_d-\sum_{i=1}^m w_{i,j_i}w_{i,j_i}^*\right).
\]
Then $\{q_{j_1,\ldots,j_m}\}$ is an interlacing family, with $q_\emptyset(x)=\mu[A_1,\ldots,A_m](x)$, where $A_i=\mathbb E\,v_iv_i^*$.
\end{lem}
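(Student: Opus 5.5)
The plan is to identify every partial-assignment polynomial as a scaled expected characteristic polynomial, prove these are real-rooted, and then convert real-rootedness of convex combinations into common interlacing.

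\textbf{Step 1: partial sums are conditional expectations.} Fix a partial assignment $(j_1,\ldots,j_k)$ and put $C:=\sum_{i\le k}w_{i,j_i}w_{i,j_i}^*$. Summing the $q$'s over all completions, with weights $p_{i,j_i}$, and using independence gives
\[
q_{j_1,\ldots,j_k}(x)=\Bigl(\prod_{i\le k}p_{i,j_i}\Bigr)\,
\mathbb E\det\Bigl(xI_d-C-\sum_{i>k}v_iv_i^*\Bigr).
\]
For $k=0$ this is exactly the left side of \eqref{mix-char-classic}, so $q_\emptyset=\mu[A_1,\ldots,A_m]$.

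\textbf{Step 2: a differential-operator formula with a fixed offset.} I would prove the analogue of \eqref{mix-char-classic} in which $xI_d$ is replaced by $xI_d-C$:
\[
\mathbb E\det\Bigl(xI_d-C-\sum_{i>k}v_iv_i^*\Bigr)
=\prod_{i>k}(1-\partial_i)\det\Bigl(xI_d-C+\sum_{i>k}z_iA_i\Bigr)\Big|_{z=0}.
\]
The argument has three parts.
\begin{itemize}
\item The rank-one identity $\det(M-vv^*)=(1-\partial_t)\det(M+t\,vv^*)|_{t=0}$ follows from the matrix determinant lemma, because $\det(M+t\,vv^*)$ is affine in $t$.
\item This identity is linear in $vv^*$, so taking expectation one vector at a time replaces $v_iv_i^*$ by $A_i$.
\item Independence lets us iterate over $i>k$.
\end{itemize}

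\textbf{Step 3: real-rootedness.} Treat $x$ as an extra variable. By Lemma~\ref{realstable-1}, with the PSD matrices $I_d$ and $A_i$ and the Hermitian offset $B=-C$, the polynomial $\det(xI_d-C+\sum_i z_iA_i)$ is real stable in $(x,z)$. Lemma~\ref{realstable-3} shows that the operators $(1-\partial_i)$ and the restrictions $z_i=0$ preserve real stability. The result is a univariate real stable polynomial in $x$; it is monic of degree $d$, so it is real-rooted. Hence every $q_{j_1,\ldots,j_k}$ is real-rooted with positive leading coefficient.

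\textbf{Step 4: common interlacing.} This is the step I expect to be the main obstacle. We need the polynomials $f_t:=q_{j_1,\ldots,j_k,t}$, $t\in[l_{k+1}]$, to have a common interlacing. I would use the classical criterion (Dedieu; Chudnovsky--Seymour; Fell): monic-up-to-scaling real-rooted polynomials $f_1,\ldots,f_r$ of the same degree have a common interlacing if and only if every convex combination $\sum_t\mu_tf_t$ is real-rooted.

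Real-rootedness of convex combinations is easy here. Such a combination equals, up to a positive factor, the polynomial of Steps 1--3 with $v_{k+1}$ replaced by a random vector taking the values $w_{k+1,t}$ with new probabilities proportional to $\mu_tp_{k+1,t}$. Since Steps 1--3 apply to any finitely supported distribution, this combination is real-rooted.

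The delicate part is the criterion itself, specifically the direction ``convex combinations real-rooted $\Rightarrow$ common interlacing.'' I would first try to quote it as a known lemma. If a self-contained proof is needed, I would reduce to two polynomials and use continuity of roots along the segment $(1-s)f_1+sf_2$. Roots cannot escape to infinity because the degree is constant, and an intermediate-value argument then shows that the $j$th roots of $f_1$ and $f_2$ both lie on the same side of the $(j+1)$th root of the other. This pairwise interlacing yields the intervals $[\max_t\lambda_{j+1}(f_t),\min_t\lambda_j(f_t)]$ being nonempty, which gives a common interlacing. Combining Steps 1--4 shows that $\{q_{j_1,\ldots,j_m}\}$ is an interlacing family with $q_\emptyset=\mu[A_1,\ldots,A_m]$.
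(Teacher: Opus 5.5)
Your proposal is correct and is essentially the argument of \cite[Theorem 4.5]{MSS15}, which the paper cites without reproducing. That argument identifies the partial sums as scaled expected characteristic polynomials, gets real-rootedness of every convex combination by reweighting the distribution of $v_{k+1}$, and invokes the Dedieu/Fell/Chudnovsky--Seymour criterion; your use of the Hermitian offset $-C$ in Lemma~\ref{realstable-1}, rather than treating the fixed vectors as deterministic with covariance $ww^*$, is an immaterial variant. If you do write out the fallback proof of the criterion, counting roots above a single test point is not enough, since a priori the count could jump by two when a double root crosses it; varying the test point over an interval closes this, because it forces $f_1f_2'-f_1'f_2\equiv0$ there and hence $f_2=cf_1$.
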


\subsection{Barrier functions}
For a real polynomial $p\in\mathbb R[z_1,\ldots,z_m]$, define the set of points \emph{above the roots} by
\[
\mathrm{Ab}_p:=\{\mathbf z\in\mathbb R^m:p(\mathbf z+\mathbf t)>0
\text{ for every }\mathbf t\in[0,\infty)^m\}.
\]
For real stable $p$ and $\mathbf z\in\mathrm{Ab}_p$, the \emph{barrier function} in direction $i$ is
\begin{equation}\label{root-representation}
\Phi_p^i(\mathbf z):=\frac{\partial_i p(\mathbf z)}{p(\mathbf z)}
=\sum_{j=1}^r\frac{1}{z_i-\lambda_j},
\end{equation}
where $\lambda_1,\ldots,\lambda_r$ are the roots, counted with multiplicity, of the univariate polynomial obtained by fixing all coordinates except the $i$th. By Lemma \ref{realstable-3} and the definition of $\mathrm{Ab}_p$, these roots are real and satisfy $\lambda_j<z_i$. The barrier functions have the following properties.

\begin{lem}[{\cite[Lemma 5.8]{MSS15}}]\label{mono-convex}
If $p\in\mathbb R[z_1,\ldots,z_m]$ is real stable and $\mathbf z\in\mathrm{Ab}_p$, then, for all $i,j\in[m]$,
\[
\Phi_p^i(\mathbf z)\geq0,\qquad
\partial_j\Phi_p^i(\mathbf z)\leq0,\qquad
\partial_{jj}\Phi_p^i(\mathbf z)\geq0.
\]
\end{lem}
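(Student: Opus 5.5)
The statement to prove is Lemma \ref{mono-convex}: for real stable $p$ and $\mathbf z\in\mathrm{Ab}_p$, one has $\Phi_p^i\ge0$, $\partial_j\Phi_p^i\le0$, and $\partial_{jj}\Phi_p^i\ge0$. The plan is to reduce each claim to a statement about real-rooted polynomials in one or two variables.

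\textbf{Nonnegativity, and the case $j=i$.} Fix all coordinates except $z_i$. By Lemma \ref{realstable-3}, the restriction is real-rooted, and since $\mathbf z\in\mathrm{Ab}_p$, all its roots satisfy $\lambda_k<z_i$. The root representation \eqref{root-representation} then gives
\[
\Phi_p^i=\sum_k \frac{1}{z_i-\lambda_k}\ge 0,\qquad
\partial_i\Phi_p^i=-\sum_k\frac{1}{(z_i-\lambda_k)^2}\le0,\qquad
\partial_{ii}\Phi_p^i=\sum_k\frac{2}{(z_i-\lambda_k)^3}\ge0 .
\]
If the restriction is constant in $z_i$, all three quantities vanish, which is harmless.

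\textbf{Reduction to two variables when $j\neq i$.} Fix all coordinates other than $z_i,z_j$. The resulting bivariate polynomial $q(z_i,z_j)$ is real stable by Lemma \ref{realstable-3}, and it is positive on the translated quadrant above the point. The key tool is the Helton--Vinnikov theorem, in the form used in \cite{MSS15}: a real stable bivariate polynomial can be written as
\[
q(z_i,z_j)=\pm\det(z_iB_i+z_jB_j+C)
\]
with $B_i,B_j$ positive semidefinite and $C$ symmetric. At points above the roots, the matrix $M=z_iB_i+z_jB_j+C$ is positive definite, after perturbing $B_i,B_j$ to be definite if needed and passing to a limit.

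\textbf{Matrix computation.} With this representation,
\[
\Phi^i=\operatorname{Tr}(M^{-1}B_i),\qquad
\partial_j\Phi^i=-\operatorname{Tr}(M^{-1}B_jM^{-1}B_i)\le0,
\]
where the sign follows by writing the trace as a trace of a product of two PSD matrices, $B_j^{1/2}M^{-1}B_iM^{-1}B_j^{1/2}$. Differentiating once more gives
\[
\partial_{jj}\Phi^i=2\operatorname{Tr}(M^{-1}B_jM^{-1}B_jM^{-1}B_i)\ge0,
\]
again as a trace of a product of PSD matrices in symmetric form, namely $(M^{-1/2}B_jM^{-1}B_jM^{-1/2})\cdot(M^{-1/2}B_iM^{-1/2})$.

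\textbf{Main obstacle.} The hard step is the determinantal representation of $q$. Since the paper lets us cite \cite{MSS15}, I would invoke their Lemma 5.6/5.7, which is exactly this bivariate representation. The only care needed is in the degenerate cases: the approximation argument that makes $B_i,B_j$ definite, and the limit that passes the inequalities back to the original polynomial.
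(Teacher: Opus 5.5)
Your proposal is correct and is essentially the proof of \cite[Lemma 5.8]{MSS15}, which the paper cites rather than reproduces: the univariate root representation gives $\Phi_p^i\ge0$ and the case $j=i$, and for $j\neq i$ the Helton--Vinnikov representation of the bivariate restriction reduces the claims to exactly the trace identities you wrote. The one point to tidy is the degenerate case, where no perturbation is needed, and perturbing $B_i,B_j$ would change the polynomial so you would lose control of whether the point stays above its roots. Instead, use matrices of size $d=\deg q$: the top-degree part $\pm\det(z_iB_i+z_jB_j)$ of $q$ is then nonzero, which forces $B_i+B_j\succ0$. It follows that $M+t(B_i+B_j)$ is invertible for every $t\ge0$ (its determinant is $\pm q(z_i+t,z_j+t)\neq0$) and positive definite for large $t$, so $M\succ0$ by continuity of eigenvalues.
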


\section{The proof of Theorem \ref{main-thm} for $k=1$}\label{sec-k=1}

We first prove Theorem \ref{main-thm} in the case $k=1$. Set $m=l$, write $v_i=v_{i1}$, and let $n_i=\rank(A_i)$, so that $n=\max_i n_i$. Our goal is to find a realization such that
\[
\lambda_m\left(\sum_{i=1}^m v_iv_i^*\right)
\geq n-\left(1+\sqrt{n-\epsilon}\right)^2.
\]
By Lemmas \ref{interlacing-poly} and \ref{interlacing-root}, it suffices to establish the same lower bound for the $m$th largest root of the classical mixed characteristic polynomial $\mu[A_1,\ldots,A_m]$ in \eqref{mix-char-classic}.

Define
\[
P_0(z_1,\ldots,z_m):=\det\left(\sum_{i=1}^m z_iA_i\right),
\]
and, for $1\leq k\leq m$, set
\begin{equation}\label{classical-P-k}
P_k:=(1-\partial_k)P_{k-1}
=\left(\prod_{i=1}^k(1-\partial_i)\right)P_0.
\end{equation}
Each $P_k$ is real stable by Lemmas \ref{realstable-1} and \ref{realstable-3}. Since $\sum_{i=1}^m A_i=I_d$, equation \eqref{translate-x} gives
\begin{equation}\label{def-p-m}
p_m(t):=P_m(t,\ldots,t)=\mu[A_1,\ldots,A_m](t).
\end{equation}

\begin{lem}\label{degree-kone}
For every $0\leq k\leq m$, we have $\deg P_k=d$ and $\deg_{z_i}P_k=n_i$ for each $i\in[m]$.
\end{lem}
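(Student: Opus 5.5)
Since $P_k=\left(\prod_{i\le k}(1-\partial_i)\right)P_0$, I would first analyze $P_0=\det\left(\sum_i z_iA_i\right)$ and then track how each operator $(1-\partial_i)$ changes the degrees. I would prove two facts: (a) the total degree of $P_k$ equals $d$ and its top homogeneous part is unchanged, and (b) the degree in each variable $z_i$ stays $n_i$.

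\emph{Step 1: $P_0$.} Because $\sum_iA_i=I_d$, we have $P_0(t,\dots,t)=t^d$. Hence $P_0$ is a nonzero homogeneous polynomial of degree $d$. For the degree in $z_i$, I write $A_i=B_iB_i^*$ with $B_i\in\mathbb C^{d\times n_i}$ of full column rank $n_i$. Fixing the other variables at positive reals, say $z_j=1$ for $j\ne i$, the matrix $M=\sum_{j\ne i}A_j$ satisfies $M+A_i=I_d$. I then have
\[
\det(M+z_iA_i)=\det\left(I_d+(z_i-1)A_i\right)=\prod_{\lambda\in\operatorname{spec}(A_i)}\bigl(1+(z_i-1)\lambda\bigr).
\]
This has degree exactly equal to the number of nonzero eigenvalues of $A_i$, which is $n_i$. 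So $\deg_{z_i}P_0\ge n_i$. Conversely, multilinearity of the determinant in columns shows that every monomial has $z_i$-degree at most $\rank(A_i)=n_i$. Concretely, expand $\det\left(\sum_j z_jB_jB_j^*\right)$ via Cauchy--Binet, or note that any term choosing more than $n_i$ columns from $z_iA_i$ vanishes because those columns are linearly dependent. Therefore $\deg_{z_i}P_0=n_i$.

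\emph{Step 2: induction on $k$.} The operator $1-\partial_k$ subtracts from $P_{k-1}$ terms of strictly lower total degree. So the homogeneous degree-$d$ part of $P_k$ equals that of $P_{k-1}$, which inductively equals $P_0$, and thus $\deg P_k=d$. For the degree in $z_i$, the operator $\partial_k$ does not increase the $z_i$-degree, so $\deg_{z_i}P_k\le\deg_{z_i}P_{k-1}=n_i$. For the lower bound, the top-degree homogeneous component of $P_k$ is $P_0$, which already contains a monomial of $z_i$-degree $n_i$. That monomial cannot be cancelled by the lower-degree terms coming from $-\partial_kP_{k-1}$, since those have a different total degree. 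Hence $\deg_{z_i}P_k=n_i$.

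The main obstacle is the exact equality $\deg_{z_i}P_0=n_i$, and in particular the upper bound via rank. I would handle it with the factorization $A_i=B_iB_i^*$. This gives $\det\left(\sum_jz_jA_j\right)=\det\left(BZB^*\right)$, where $B=[B_1|\cdots|B_m]$ and $Z=\diag(z_1I_{n_1},\dots,z_mI_{n_m})$. The Cauchy--Binet expansion over $d$-subsets of columns then makes each monomial a product of $d$ diagonal entries of $Z$, in which $z_i$ appears at most $n_i$ times.
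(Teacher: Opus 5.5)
Your proof is correct and follows essentially the same route as the paper. The lower bound $\deg_{z_i}P_0\ge n_i$ comes from setting the other variables to $1$ and using $\sum_j A_j=I_d$: the leading coefficient of your $\prod(1+(z_i-1)\lambda)$ is exactly the paper's $\lambda_1\cdots\lambda_{n_i}\det M_i$ with $M_i=I_{d-n_i}$. The upper bound comes from $\rank A_i=n_i$, and $P_0$ stays the top homogeneous part of every $P_k$.

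The only difference is in the induction step. You get $\deg_{z_i}P_k\ge n_i$ from a top-degree monomial of $P_0$ that survives in $P_k$. The paper instead tracks the leading coefficient in $z_i$ and uses that $1-\partial_k$ is injective on polynomials. Both arguments are valid, and yours reuses the top-homogeneous-part observation a little more economically.
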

\begin{proof}
By its determinant definition, $P_0$ is homogeneous, and $P_0(t,\ldots,t)=t^d$ shows that its degree is $d$. Since differentiation makes the total degree lower, $P_0$ is the highest-degree homogeneous part of every $P_k$.

Fix $i\in[m]$. Choose a unitary matrix $U$ such that
\[
UA_iU^*=\diag(\lambda_1,\ldots,\lambda_{n_i},0,\ldots,0),
\qquad \lambda_1,\ldots,\lambda_{n_i}>0.
\]
Then $\deg_{z_i}P_0\leq n_i$, and the coefficient of $z_i^{n_i}$ is
\[
\lambda_1\cdots\lambda_{n_i}\det M_i,
\]
where $M_i$ is the lower-right $(d-n_i)\times(d-n_i)$ block of $\sum_{h\neq i}z_hUA_hU^*$. Setting $z_h=1$ for every $h\neq i$ gives $M_i=I_{d-n_i}$, so this coefficient is nonzero. Thus $\deg_{z_i}P_0=n_i$.

For $\deg_{z_i} P_k$ with $k>0$, notice that the degrees are preserved inductively under $P_k=(1-\partial_k)P_{k-1}$. More precisely, if $i=k$, differentiation lowers the degree in $z_i$, leaving its leading coefficient unchanged; if $i\neq k$, the leading coefficient in $z_i$ is transformed by $1-\partial_k$, which maps every nonzero polynomial to a nonzero polynomial.
\end{proof}

We apply the flip transformation from \cite{CXX24} by defining
\begin{equation}\label{def-Q-k--}
Q_k^-(z_1,\ldots,z_m):=(-1)^k(z_1\cdots z_m)^n
P_k(z_1^{-1},\ldots,z_m^{-1}).
\end{equation}
Since $\deg_{z_i}P_k\leq n$, this is a polynomial of degree at most $n$ in each variable. If $\Im z_i>0$, then $z_i^{-1}$ lies in the lower half-plane. So real stability of $P_k$ and conjugate symmetry imply that $Q_k^-$ is also real stable. Moreover,
\[
Q_m^-(t,\ldots,t)=(-1)^m t^{nm}p_m(t^{-1}),
\]
where $p_m$ is defined in \eqref{def-p-m}. Since all roots of $p_m$ are nonnegative, the nonzero roots of
$Q_m^-(t,\ldots,t)$ are precisely the reciprocals of the positive
roots of $p_m$. Thus, an upper bound for the largest root of $Q_m^-(t,\ldots,t)$ gives a lower bound for the smallest positive root of $p_m$.

Differentiating the definition of $Q_{k-1}^-$ in \eqref{def-Q-k--} and using \eqref{classical-P-k} gives
\begin{equation}\label{eq-flip-kone}
Q_k^-=(nz_k-1-z_k^2\partial_k)Q_{k-1}^-.
\end{equation}
For a real stable polynomial $p$ with $\deg_{z_k}p\leq n$, define the auxiliary function
\begin{equation}\label{def-G-p}
G_p^k(\mathbf z):=nz_k-1-z_k^2\Phi_p^k(\mathbf z),
\qquad \mathbf z\in\mathrm{Ab}_p.
\end{equation}

\begin{lem}\label{G-monotone-kone}
The function $G_p^k$ is nondecreasing in each coordinate on $\mathrm{Ab}_p$.
\end{lem}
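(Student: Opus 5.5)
The plan is to check the sign of each partial derivative $\partial_j G_p^k$ on $\mathrm{Ab}_p$. Since $\mathrm{Ab}_p$ is closed under adding vectors in $[0,\infty)^m$, any segment from $\mathbf z$ in a positive coordinate direction stays in $\mathrm{Ab}_p$. Nonnegativity of the partial derivatives therefore gives monotonicity along coordinate lines. I will treat the directions $j\neq k$ and $j=k$ separately.

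\emph{Directions $j\neq k$.} Here the terms $nz_k-1$ and $z_k^2$ do not depend on $z_j$, so
\[
\partial_j G_p^k(\mathbf z)=-z_k^2\,\partial_j\Phi_p^k(\mathbf z).
\]
By Lemma \ref{mono-convex}, $\partial_j\Phi_p^k\le 0$ on $\mathrm{Ab}_p$, hence $\partial_j G_p^k\ge 0$.

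\emph{Direction $j=k$.} Fix all coordinates except $z_k$. Using \eqref{root-representation}, write $\Phi_p^k(\mathbf z)=\sum_{i=1}^r (z_k-\lambda_i)^{-1}$. The $\lambda_i<z_k$ are the real roots of $p$ as a polynomial in $z_k$, with the other coordinates frozen. Their number satisfies $r\le\deg_{z_k}p\le n$. If $r=0$, then $G_p^k=nz_k-1$ and there is nothing to prove. Otherwise I will use the partial-fraction identity
\[
\frac{z_k^2}{z_k-\lambda_i}=z_k+\lambda_i+\frac{\lambda_i^2}{z_k-\lambda_i},
\]
which gives
\[
G_p^k(\mathbf z)=(n-r)z_k-1-\sum_{i=1}^r\Bigl(\lambda_i+\frac{\lambda_i^2}{z_k-\lambda_i}\Bigr).
\]
The $\lambda_i$ do not depend on $z_k$, so differentiating in $z_k$ yields
\[
\partial_k G_p^k(\mathbf z)=(n-r)+\sum_{i=1}^r\frac{\lambda_i^2}{(z_k-\lambda_i)^2}\ge 0,
\]
because $r\le n$.

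The only point requiring care is the $j=k$ case. The naive derivative $n-2z_k\Phi-z_k^2\partial_k\Phi$ has terms of mixed sign. The partial-fraction rewriting resolves this, and it is exactly where the hypothesis $\deg_{z_k}p\le n$ enters. Combining the two cases, $G_p^k$ is nondecreasing in each coordinate on $\mathrm{Ab}_p$.
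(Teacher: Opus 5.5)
Your proposal is correct and matches the paper's proof: you handle the directions $j\neq k$ with Lemma \ref{mono-convex}, and the direction $k$ with the root representation \eqref{root-representation}, obtaining $\partial_k G_p^k=n-r+\sum_i\lambda_i^2/(z_k-\lambda_i)^2\ge 0$ from $r\le\deg_{z_k}p\le n$. Your partial-fraction step is simply an explicit way of carrying out the same differentiation, and your remark that $\mathrm{Ab}_p$ is closed under adding vectors in $[0,\infty)^m$ supplies the step from nonnegative derivatives to monotonicity, which the paper leaves implicit.
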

\begin{proof}
For $i\neq k$, Lemma \ref{mono-convex} gives
\[
\partial_iG_p^k(\mathbf z)=-z_k^2\partial_i\Phi_p^k(\mathbf z)\geq0.
\]
For the $k$th coordinate, fix $\mathbf z\in\mathrm{Ab}_p$ and consider the univariate polynomial
\[
q(t):=p(z_1,\ldots,z_{k-1},t,z_{k+1},\ldots,z_m).
\]
Let $\lambda_1,\ldots,\lambda_r$ be its roots, counted with multiplicity, where
$r=\deg q\leq\deg_{z_k}p\leq n$. Then the root representation of the barrier function \eqref{root-representation} yields
\[
\partial_kG_p^k(\mathbf z)
=n-r+\sum_{a=1}^r\frac{\lambda_a^2}{(z_k-\lambda_a)^2}\geq0.
\]
\end{proof}

\begin{proof}[Proof of Theorem \ref{main-thm} for $k=1$]
We seek $x>0$ such that $(x,\ldots,x)\in\mathrm{Ab}_{Q_m^-}$. Let $x>x_0>0$ be chosen later, and set $\delta:=x-x_0$. We will choose $x$ and $x_0$ to satisfy
\begin{equation}\label{eq-step-kone}
\delta\left(\frac{nx-1}{x^2}-\frac{n-\epsilon}{x_0}\right)=1,
\end{equation}
which in particular implies $\frac{nx-1}{x^2}>\frac{n-\epsilon}{x_0}$. For $0\leq j\leq m$, define
\begin{equation}\label{def-a-j}
a_j:=(\underbrace{x,\ldots,x}_{j},
\underbrace{x_0,\ldots,x_0}_{m-j}).
\end{equation}

We now prove by induction that
\begin{equation}\label{eq-invariant-kone}
a_j\in\mathrm{Ab}_{Q_j^-}
\quad\text{and}\quad \Phi_{Q_j^-}^i(a_j)\leq\frac{n-\epsilon}{x_0}\quad\text{for every }i>j.
\end{equation}

For $j=0$, the identity
\[
Q_0^-(\mathbf z)=(z_1\cdots z_m)^n
\det\left(\sum_{i=1}^m\frac{A_i}{z_i}\right)
\]
and $\sum_iA_i=I_d$ show that $Q_0^-(\mathbf z)>0$ whenever every $z_i>0$. Hence $a_0\in\mathrm{Ab}_{Q_0^-}$, and direct differentiation gives
\[
\Phi_{Q_0^-}^i(a_0)
=\frac{n-\operatorname{Tr}(A_i)}{x_0}\leq\frac{n-\epsilon}{x_0},
\qquad i\in[m].
\]

Suppose \eqref{eq-invariant-kone} holds at step $j-1$, and write $p=Q_{j-1}^-$. Since $a_j=a_{j-1}+\delta e_j$, we have $a_j\in\mathrm{Ab}_p$. By monotonicity of the barrier functions (recall Lemma \ref{mono-convex}),
\[
\Phi_p^j(a_j)\leq\Phi_p^j(a_{j-1})\leq\frac{n-\epsilon}{x_0},
\]
so the auxiliary function $G_p^j$ defined above satisfies
\[
G_p^j(a_j)=x^2\left(\frac{nx-1}{x^2}-\Phi_p^j(a_j)\right)
\geq x^2\left(\frac{nx-1}{x^2}-\frac{n-\epsilon}{x_0}\right)>0.
\]
For every $\mathbf u\in[0,\infty)^m$, Lemma \ref{G-monotone-kone} therefore gives $G_p^j(a_j+\mathbf u)>0$. Hence, by \eqref{eq-flip-kone},
\[
Q_j^-(a_j+\mathbf u)=G_p^j(a_j+\mathbf u)p(a_j+\mathbf u)>0,
\]
which proves $a_j\in\mathrm{Ab}_{Q_j^-}$.

For $i>j$, differentiating \eqref{eq-flip-kone} yields
$$
\Phi_{Q_j^-}^i(a_j)=\Phi_p^i(a_j)
-\frac{\partial_j\Phi_p^i(a_j)}{\frac{nx-1}{x^2}-\Phi_p^j(a_j)}\leq\Phi_p^i(a_j)-\delta\partial_j\Phi_p^i(a_j),$$
where the last inequality follows from $\partial_j\Phi_p^i\leq0$ and
$\frac{nx-1}{x^2}-\Phi_p^j(a_j)
\geq\frac{nx-1}{x^2}-\frac{n-\epsilon}{x_0}=\frac1\delta$.

Finally, by convexity in the $j$th coordinate, we conclude that
$$
\Phi_{Q_j^-}^i(a_j)\leq\Phi_p^i(a_{j-1})\leq\frac{n-\epsilon}{x_0},$$
which completes the proof of \eqref{eq-invariant-kone}.

It remains to choose $x$ and $x_0$ satisfying \eqref{eq-step-kone}. Writing $x=(r+1)x_0$ with $r>0$, that condition becomes
\[
\frac1x=\epsilon-1-r(n-\epsilon)-\frac1r.
\]
The right-hand side is maximized at $r=(n-\epsilon)^{-1/2}$, giving
\[
x=\frac1{\epsilon-1-2\sqrt{n-\epsilon}}
=\frac1{n-(1+\sqrt{n-\epsilon})^2},
\qquad
x_0=\frac{x}{1+(n-\epsilon)^{-1/2}}.
\]
The assumptions of Theorem \ref{main-thm} imply $n-\epsilon>0$ and $x>x_0>0$, so these choices are valid. By \eqref{def-a-j} and \eqref{eq-invariant-kone}, we have $(x,\ldots,x)\in\mathrm{Ab}_{Q_m^-}$. Therefore the largest root of $Q_m^-(t,\ldots,t)$ is at most $x$, and the smallest positive root of $p_m$ is at least $1/x$.

We still need to show that $p_m$ defined in \eqref{def-p-m} has $m$ nonzero roots. Let $R$ be the maximum rank of $\sum_{i=1}^m v_iv_i^*$ over all realizations in the support. Then every polynomial $\det\left(yI_d-\sum_{i=1}^m v_iv_i^*\right)$ is divisible by $y^{d-R}$. More precisely, the coefficient of $y^{d-R}$ is zero when $\rank\left(\sum_{i=1}^m v_iv_i^*\right)<R$, and equals
\[
(-1)^R\prod_{a=1}^R\lambda_a\left(\sum_{i=1}^m v_iv_i^*\right)
\]
when $\rank\left(\sum_{i=1}^m v_iv_i^*\right)=R$. These nonzero coefficients all have the same sign, so their positive-probability average cannot vanish. Thus $p_m(y)=\mathbb E\det\left(yI_d-\sum_{i=1}^m v_iv_i^*\right)$ has exactly $d-R$ zero roots. Since all its roots are nonnegative, the number $D$ of positive roots, counted with multiplicity, satisfies $D=R$.

The rank argument establishing $D=K$ at the end of Section \ref{sec-k>1}, applied with $k_i=1$ for every $i$, gives $D=m$. Lemmas \ref{interlacing-poly} and \ref{interlacing-root} now yield a realization such that
\[
\lambda_m\left(\sum_{i=1}^m v_iv_i^*\right)\geq\lambda_m(p_m)\geq\frac1x
=n-(1+\sqrt{n-\epsilon})^2.
\]
\end{proof}


\section{The associated mixed characteristic polynomial for $k\geq 2$}\label{sec-mixed-poly}
To prove the case $k\geq 2$ in Theorem \ref{main-thm}, we need to generalize the classical mixed characteristic polynomial identity \eqref{mix-char-classic} to prescribed numbers of vectors from each group.
\begin{thm}\label{thm-mix-char-poly}
Let $v_{ij}\in\mathbb C^d$, $1\leq i\leq l$, $1\leq j\leq k_i$, be independent random vectors satisfying $\mathbb E\,v_{ij}v_{ij}^*=A_i$ for every $i,j$. Then, for every $A_0\in\mathbb C^{d\times d}$,
\begin{align}
\mathbb{E}\det\left(A_0-\sum_{i,j}v_{ij}v_{ij}^*\right)
&=
\left.
\prod_{i=1}^{l}(1-\partial_{z_i})^{k_i}
\det\left(A_0+\sum_{i=1}^{l}z_iA_i\right)
\right|_{z_1=\cdots=z_l=0}
\label{eq-mixcharacter-2}
\\
&=
\left.
\det\left(A_0-\sum_{i=1}^{l} A_i\partial_{z_i}\right)
\prod_{i=1}^{l} z_i^{k_i}
\right|_{z_1=\cdots=z_l=1}
\label{eq-mixcharacter-1}
\end{align}
\end{thm}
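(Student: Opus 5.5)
Two identities have to be proved. For \eqref{eq-mixcharacter-2} I would generalise the rank-one update argument behind \eqref{mix-char-classic}, and then derive \eqref{eq-mixcharacter-1} from it by a formal change of variables.

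\textbf{First identity.} The basic tool is the rank-one identity: for any matrix $M$ and vector $v$,
\[
\det(M-vv^*)=(1-\partial_t)\det(M+tvv^*)\big|_{t=0}.
\]
This holds because $\det(M+tvv^*)$ is affine in $t$ (matrix determinant lemma, valid for all $M$ by density). Independence of $v$ from the remaining vectors and linearity of expectation then give
\[
\mathbb E\det(M-vv^*)=(1-\partial_t)\det(M+tA)\big|_{t=0},
\qquad A=\mathbb E\,vv^*,
\]
for any $M$ independent of $v$, including matrices polynomial in auxiliary variables.

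I would process the $K$ vectors one at a time, conditioning on the others. Each step introduces a fresh variable $t_{ij}$ together with the operator $(1-\partial_{t_{ij}})$. The outcome is
\[
\mathbb E\det\Bigl(A_0-\sum v_{ij}v_{ij}^*\Bigr)=\prod_{i,j}(1-\partial_{t_{ij}})\det\Bigl(A_0+\sum_{i,j}t_{ij}A_i\Bigr)\Big|_{t=0}.
\]
This is exactly the argument of \cite{MSS15} with $K$ matrices, where $A_i$ is repeated $k_i$ times.

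Next I identify variables: set $t_{ij}=z_i$ for all $j$. The polynomial $F(t)=\det(A_0+\sum_{i,j} t_{ij}A_i)$ depends on $t$ only through the sums $s_i=\sum_j t_{ij}$, say $F(t)=f(s)$. Hence $\partial_{t_{ij}}F=(\partial_{s_i}f)(s)$. Induction on the number of applied operators shows that $\prod_{i,j}(1-\partial_{t_{ij}})F=\bigl[\prod_i(1-\partial_{s_i})^{k_i}f\bigr](s)$. Evaluating at $t=0$, so $s=0$, gives \eqref{eq-mixcharacter-2}.

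\textbf{Second identity.} The main obstacle is \eqref{eq-mixcharacter-1}, where the roles of the polynomial and the operator are swapped. I would expand
\[
f(z)=\det\Bigl(A_0+\sum_i z_iA_i\Bigr)=\sum_\alpha c_\alpha z^\alpha.
\]
Then $\det\bigl(A_0-\sum_i A_i\partial_{z_i}\bigr)=\sum_\alpha c_\alpha(-\partial)^\alpha$, which is well defined because the $\partial_{z_i}$ commute. Both sides of \eqref{eq-mixcharacter-1} are now linear in the coefficients $c_\alpha$, so it suffices to check monomials.

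For $f=z^\alpha$, the left side of \eqref{eq-mixcharacter-2} equals $\prod_i\bigl[(1-\partial)^{k_i}z^{\alpha_i}\bigr]_{z=0}=\prod_i(-1)^{\alpha_i}\binom{k_i}{\alpha_i}\alpha_i!$.

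On the other side, $(-\partial)^{\alpha}\prod_i z_i^{k_i}$ evaluated at $z=1$ equals $\prod_i(-1)^{\alpha_i}k_i(k_i-1)\cdots(k_i-\alpha_i+1)$.

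These agree, since $\binom{k_i}{\alpha_i}\alpha_i!$ is the falling factorial, and both vanish when $\alpha_i>k_i$. This coefficient bookkeeping, including the case $\alpha_i>k_i$, is the only point needing care. Summing over $\alpha$ completes the proof.
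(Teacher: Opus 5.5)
Your proposal is correct and follows essentially the same route as the paper. For \eqref{eq-mixcharacter-2} you apply the rank-one identity successively with fresh variables $t_{ij}$ and then collapse them to $z_i$ by the chain rule. For \eqref{eq-mixcharacter-1} you compare monomials using $(1-\partial_z)^k z^s\big|_{z=0}=(-\partial_z)^s z^k\big|_{z=1}$, which is the falling factorial with sign, or $0$ when $s>k$.
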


Notice that, with $A_0=xI_d$, identity \eqref{eq-mixcharacter-2} reduces to \eqref{mix-char-classic} when all $k_i=1$; when $l=1$, $k_1=k$, $A_0=xI_d$, we can take $z_1=z/x$ with $\frac{\partial }{\partial_{z_1}}=x\frac{\partial }{\partial_z}$ and reduce \eqref{eq-mixcharacter-1} to
$$\det(xI_d-xA_1\partial_z) (z/x)^k|_{z=x}=x^{d-k}(\det(I_d-A_1\partial_x)x^k),$$
which recovers \cite[Lemma 4.2]{MSS22}.

\begin{proof}[Proof of Theorem \ref{thm-mix-char-poly}]
Set
\[
F(z_1,\ldots,z_l):=\det\left(A_0+\sum_{i=1}^l z_iA_i\right).
\]
For any fixed matrix $B\in\mathbb C^{d\times d}$ and random vector $v\in\mathbb C^d$ with finite second moment, the rank-one determinant identity (see, e.g, \cite[Lemma 4.2]{MSS15}) gives
\[
\mathbb E\det(B-vv^*)
=\left.(1-\partial_t)\det\left(B+t\mathbb E\,vv^*\right)\right|_{t=0}.
\]
Applying this identity successively and using independence, we obtain
$$\mathbb E\det\left(A_0-\sum_{i,j}v_{ij}v_{ij}^*\right)=\left.
\left(\prod_{i=1}^l\prod_{j=1}^{k_i}(1-\partial_{t_{ij}})\right)
F\left(\sum_{j=1}^{k_1}t_{1j},\ldots,\sum_{j=1}^{k_l}t_{lj}\right)
\right|_{t_{ij}=0}.$$
By the chain rule, each $\partial_{t_{ij}}$ acts as $\partial_{z_i}$ on $F$. Hence the right-hand side equals
\[
\left.\prod_{i=1}^l(1-\partial_{z_i})^{k_i}F(z_1,\ldots,z_l)
\right|_{z_1=\cdots=z_l=0},
\]
which proves \eqref{eq-mixcharacter-2}.

To obtain \eqref{eq-mixcharacter-1}, note that for any nonnegative integers $k,s$,
\[
\left.(1-\partial_z)^kz^s\right|_{z=0}
=\left.(-\partial_z)^sz^k\right|_{z=1}
=\begin{cases}
(-1)^s\dfrac{k!}{(k-s)!},&0\leq s\leq k,\\[4pt]
0,&s>k.
\end{cases}
\]
Applying this identity to each variable in each monomial of $F$, and using linearity, gives
$$\left.\prod_{i=1}^l(1-\partial_{z_i})^{k_i}F(z_1,\ldots,z_l)
\right|_{z_1=\cdots=z_l=0}=\left.
F(-\partial_{z_1},\ldots,-\partial_{z_l})
\prod_{i=1}^l z_i^{k_i}
\right|_{z_1=\cdots=z_l=1}.$$
Since
\[
F(-\partial_{z_1},\ldots,-\partial_{z_l})
=\det\left(A_0-\sum_{i=1}^l A_i\partial_{z_i}\right),
\]
this is \eqref{eq-mixcharacter-1}.
\end{proof}

\section{The proof of Theorem \ref{main-thm} for $k\geq 2$}\label{sec-k>1}
Set $K:=\sum_{i=1}^l k_i$. By \eqref{eq-mixcharacter-2} and $\sum_iA_i=I_d$, the expected characteristic polynomial is
\[
\mu(x):=\mathbb E\det\left(xI_d-\sum_{i,j}v_{ij}v_{ij}^*\right)
=\left.\prod_{i=1}^l(1-\partial_i)^{k_i}
\det\left(\sum_{i=1}^l z_iA_i\right)\right|_{z_1=\cdots=z_l=x}.
\]
The roots of $\mu$ are real and nonnegative, as recalled in Section 2. Our goal is to prove that $\lambda_K(\mu)\geq n-(\sqrt{k}+\sqrt{n-\epsilon})^2$, which yields Theorem \ref{main-thm} by Lemmas \ref{interlacing-poly} and \ref{interlacing-root}. Compared with Section \ref{sec-k=1}, the new ingredient is a barrier estimate for repeated application of the operator in the same direction.

For $1\leq i\leq l$ and $0\leq s\leq k_i$, define
\[
P_{i,s}:=\left(\prod_{j=1}^{i-1}(1-\partial_j)^{k_j}\right)
(1-\partial_i)^s\det\left(\sum_{j=1}^l z_jA_j\right).
\]
As explained in the proof of Lemma \ref{degree-kone}, each application of $1-\partial_j$ preserves the total degree and the degree in each variable. Thus every $P_{i,s}$ is real stable, has total degree $d$, and has degree $\rank(A_j)\leq n$ in $z_j$.

Similar to Section \ref{sec-k=1}, we define the real stable polynomials
\[
Q_{i,s}^-(z_1,\ldots,z_l)
:=(-1)^{s+\sum_{j<i}k_j}(z_1\cdots z_l)^n
P_{i,s}(z_1^{-1},\ldots,z_l^{-1}).
\]
Their degree in each variable is at most $n$, and
\begin{equation}\label{eq-flip-general}
Q_{i,s}^-=(nz_i-1-z_i^2\partial_i)Q_{i,s-1}^-,
\qquad 1\leq s\leq k_i.
\end{equation}
Moreover, $Q_{i,0}^-=Q_{i-1,k_{i-1}}^-$ for $i\geq2$. We use the auxiliary functions $G_p^i$ and their monotonicity from Lemma \ref{G-monotone-kone}, together with the following estimate.

\begin{lem}\label{barrier-i=j}
Let $p\in\mathbb R[z_1,\ldots,z_l]$ be real stable with $\deg_{z_i}p\leq n$, and assume that $(nz_i-1-z_i^2\partial_i)p$ is also real stable. Suppose that $\mathbf a\in\mathrm{Ab}_p\cap(0,\infty)^l$, $\beta,\delta>0$, and $\Phi_p^i(\mathbf a)\leq\beta$. Set $\mathbf b=\mathbf a+\delta e_i$. If
\[
\beta<\frac{nb_i-1}{b_i^2},\qquad
\delta\geq\frac1{\frac{nb_i-1}{b_i^2}-\beta},\qquad
a_i\beta\geq\frac n2,
\]
then
\[
\mathbf b\in\mathrm{Ab}_{(nz_i-1-z_i^2\partial_i)p},\qquad
\Phi_{(nz_i-1-z_i^2\partial_i)p}^i(\mathbf b)\leq\beta.
\]
\end{lem}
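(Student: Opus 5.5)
Write $q:=(nz_i-1-z_i^2\partial_i)p$, so that $q=G_p^i\,p$ on $\mathrm{Ab}_p$, with $G_p^i$ the auxiliary function from Lemma \ref{G-monotone-kone}. The plan has two steps: first membership in $\mathrm{Ab}_q$, then the barrier bound. The second reduces to a one-variable inequality in the roots of $t\mapsto p(\mathbf b+(t-b_i)e_i)$.

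\emph{Membership.} Since $\mathbf a\in\mathrm{Ab}_p$ and $\mathbf b\ge\mathbf a$, we have $\mathbf b\in\mathrm{Ab}_p$. Monotonicity from Lemma \ref{mono-convex} gives $\varphi:=\Phi_p^i(\mathbf b)\le\Phi_p^i(\mathbf a)\le\beta$. Hence
\[
G_p^i(\mathbf b)=b_i^2\Bigl(\tfrac{nb_i-1}{b_i^2}-\varphi\Bigr)\ge b_i^2\Bigl(\tfrac{nb_i-1}{b_i^2}-\beta\Bigr)>0.
\]
Lemma \ref{G-monotone-kone} then gives $G_p^i(\mathbf b+\mathbf u)>0$ for all $\mathbf u\in[0,\infty)^l$. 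Multiplying by $p(\mathbf b+\mathbf u)>0$ shows $\mathbf b\in\mathrm{Ab}_q$, exactly as in the $k=1$ proof.

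\emph{Barrier bound.} Differentiate $\log q=\log G_p^i+\log p$ in $z_i$ to get
\[
\Phi_q^i(\mathbf b)=\varphi+\frac{n-2b_i\varphi-b_i^2\varphi'}{D},\qquad D:=nb_i-1-b_i^2\varphi>0,
\]
where $\varphi'=\partial_i\Phi_p^i(\mathbf b)\le0$. Put $\gamma:=\tfrac{nb_i-1}{b_i^2}-\beta$, so that $\gamma\delta\ge1$ and $D=b_i^2\gamma+b_i^2(\beta-\varphi)$. Convexity of $\Phi_p^i$ in $z_i$ (Lemma \ref{mono-convex}) gives $\beta\ge\Phi_p^i(\mathbf a)\ge\varphi-\delta\varphi'$, hence $-\varphi'\le(\beta-\varphi)/\delta\le\gamma(\beta-\varphi)$. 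Substituting this and clearing $D$, the target $\Phi_q^i(\mathbf b)\le\beta$ reduces to
\[
b_i^2(\beta-\varphi)^2\ \ge\ n-2b_i\varphi .
\]
With $u=b_i\varphi$ and $c=b_i\beta$, this reads $(c-u)^2+2u-n\ge0$.

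\emph{Main obstacle.} Proving $(c-u)^2+2u-n\ge0$ is the step I expect to be hardest. When $u\ge n/2$ it is trivial. Otherwise the quadratic in $u$ has minimum $2c-1-n$ at $u=c-1$. So $c\ge n/2$ alone, which follows from $b_i\beta\ge a_i\beta\ge n/2$, is not quite enough, and convexity has thrown away information. I would therefore redo the estimate with the exact root representation rather than convexity. Let $\lambda_1,\dots,\lambda_r<a_i$ be the roots, with $r\le n$. Then
\[
\beta-\varphi\ \ge\ \sum_j\frac{\delta}{(a_i-\lambda_j)(b_i-\lambda_j)},\qquad
n-2b_i\varphi-b_i^2\varphi'=n-r+\sum_j\frac{\lambda_j^2}{(b_i-\lambda_j)^2}.
\]
I would compare these termwise, using $a_i\beta\ge n/2$ and $\delta\gamma\ge1$ to absorb the $n-r$ term and the cross terms; a Cauchy–Schwarz step over $j$ should handle the sums. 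If that still leaves a gap, the fallback is to split into the cases $u\ge n/2$ (trivial) and $u<n/2$. In the latter case I would use $a_i\beta\ge n/2>u\,a_i/b_i$ to obtain the extra separation between $c$ and $u$ needed to close the inequality.
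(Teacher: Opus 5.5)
Everything up to your reduced inequality is correct. That covers the membership argument, the formula for $\Phi^i_q(\mathbf b)$, the tangent-line bound $-\varphi'\le(\beta-\varphi)/\delta\le\gamma(\beta-\varphi)$, and the reduction of $\Phi^i_q(\mathbf b)\le\beta$ to $(c-u)^2+2u-n\ge0$. The gap is that you then stop, assert that this inequality does not follow, and replace the decisive step by plans you do not carry out: a termwise comparison, an unspecified Cauchy--Schwarz step, and a case split. The case split does not close anything on its own. For $u<n/2$ it yields at best $c-u>\delta\beta$. That excludes the minimizer $c-u=1$ of $(c-u)^2+2u-n=(c-u-1)^2+2c-1-n$ only if you already know $\delta\beta\ge1$. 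So, as written, the proposal does not prove the lemma.

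The missing idea is that the hypotheses themselves force $\delta\beta>1$. From $a_i\beta\ge n/2$ and $b_i>a_i$,
\[
\gamma=\frac{nb_i-1}{b_i^2}-\beta<\frac{n}{b_i}-\beta<\frac{n}{a_i}-\beta\le\beta,
\]
so $\delta\beta>\delta\gamma\ge1$. Then $c=a_i\beta+\delta\beta>\frac n2+1$, hence $(c-u)^2+2u-n=(c-u-1)^2+2c-1-n>1$ for every real $u$, and your convexity route is complete. The information you thought convexity had discarded is recovered from $\delta\gamma\ge1$, which you had used only to trade $1/\delta$ for $\gamma$.

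The paper argues differently at exactly this point. It applies Cauchy--Schwarz to the roots, $(\sum_j\nu_j)^2\le(\sum_ju_j)(\sum_j\nu_j^2/u_j)$ with $u_j=(a_i-\lambda_j)^{-1}$, $\nu_j=(b_i-\lambda_j)^{-1}$ and $\nu_j^2/u_j=\nu_j-\delta\nu_j^2$. This gives $-\varphi'\le\varphi(\beta-\varphi)/(\delta\beta)$, sharper than your tangent-line bound by the factor $\varphi/\beta$ (and exact when there is a single root). It retains an extra term $b_i^2(\beta-\varphi)^2/(\delta\beta)$, so the resulting quadratic has minimum $2b_i\beta-n-\frac{\delta\beta}{1+\delta\beta}\ge0$ without any lower bound on $\delta\beta$. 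Completed as above, your version is the more elementary one, since it needs only Lemma~\ref{mono-convex} and no root-level computation.
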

\begin{proof}
Since $\mathbf b\in\mathrm{Ab}_p$ and $\Phi_p^i(\mathbf b)\leq\Phi_p^i(\mathbf a)\leq\beta$, we have $G_p^i(\mathbf b)>0$. By Lemma \ref{G-monotone-kone}, for every $\mathbf u\geq0$,
\[
\bigl((nz_i-1-z_i^2\partial_i)p\bigr)(\mathbf b+\mathbf u)=G_p^i(\mathbf b+\mathbf u)p(\mathbf b+\mathbf u)>0.
\]
Hence $\mathbf b\in\mathrm{Ab}_{(nz_i-1-z_i^2\partial_i)p}$.

Let $\lambda_1,\ldots,\lambda_r$ be the roots with multiplicity of
$$p_i(y):=p(a_1,\ldots,a_{i-1},y,a_{i+1},\ldots,a_l).$$
Notice that $r=\deg p_i\leq\deg_{z_i}p\leq n$. Set $u_j=(a_i-\lambda_j)^{-1}$ and $\nu_j=(b_i-\lambda_j)^{-1}$. These numbers are positive and satisfy $\nu_j^2/u_j=\nu_j-\delta\nu_j^2$. Then Cauchy--Schwarz gives
$$
\bigl(\Phi_p^i(\mathbf b)\bigr)^2\leq\left(\sum_{j=1}^r u_j\right)
\left(\sum_{j=1}^r\frac{\nu_j^2}{u_j}\right)\leq\beta\bigl(\Phi_p^i(\mathbf b)+\delta\partial_i\Phi_p^i(\mathbf b)\bigr).$$
Recall $\Phi_p^i(\mathbf b)\leq\beta$ from the first line of the proof. So the preceding inequality gives
\begin{equation}\label{ineq-partial}
-\partial_i\Phi_p^i(\mathbf b)
\leq\frac{\Phi_p^i(\mathbf b)\bigl(\beta-\Phi_p^i(\mathbf b)\bigr)}{\delta\beta}.
\end{equation}
The assumption on $\delta$ also yields
\begin{equation}\label{ineq-barrier}
\frac{nb_i-1}{b_i^2}-\Phi_p^i(\mathbf b)
\geq\frac1\delta+\beta-\Phi_p^i(\mathbf b).
\end{equation}
Now we differentiate $(nz_i-1-z_i^2\partial_i)p=G_p^i\cdot p$ (recall \eqref{def-G-p}) to obtain
$$\Phi_{(nz_i-1-z_i^2\partial_i)p}^i(\mathbf b)=\Phi_p^i(\mathbf b)
+\frac{n-2b_i\Phi_p^i(\mathbf b)-b_i^2\partial_i\Phi_p^i(\mathbf b)}{G_p^i(\mathbf b)}.$$
Then, using \eqref{ineq-partial} and \eqref{ineq-barrier}, we conclude that
\[
\begin{aligned}
&G_p^i(\mathbf b)\bigl(\beta-\Phi_{(nz_i-1-z_i^2\partial_i)p}^i(\mathbf b)\bigr)\\
&\qquad\geq b_i^2\bigl(\beta-\Phi_p^i(\mathbf b)\bigr)^2
\left(1+\frac1{\delta\beta}\right)
-2b_i\bigl(\beta-\Phi_p^i(\mathbf b)\bigr)+2b_i\beta-n\\
&\qquad\geq 2a_i\beta-n+2\delta\beta-\frac{\delta\beta}{1+\delta\beta}
\geq0,
\end{aligned}
\]
where the second inequality follows by completing the square, and the last uses $a_i\beta\geq n/2$. Since $G_p^i(\mathbf b)>0$, the result follows.
\end{proof}

\begin{proof}[Proof of Theorem \ref{main-thm} for $k\geq2$]
Choose
\[
r:=\sqrt{\frac{k}{n-\epsilon}},\qquad
x_k:=\frac1{\epsilon-k-2\sqrt{k(n-\epsilon)}},\qquad
x_0:=\frac{x_k}{1+r},
\]
and set
\[
\delta:=\frac{x_k-x_0}{k},\qquad x_s:=x_0+s\delta\quad(0\leq s\leq k).
\]
The assumptions of the theorem imply $n-\epsilon>0$ and $\epsilon-k>2\sqrt{k(n-\epsilon)}$, so $0<x_0<\cdots<x_k$. Direct calculation gives
\[
\frac1{x_0}=\epsilon-3k+(3\epsilon-2n-k)r<\epsilon\leq\frac n2.
\]
Thus $x_0>2/n$, and $x\mapsto(nx-1)/x^2$ is decreasing on $[x_0,\infty)$. For every $1\leq s\leq k$, we consequently have
\begin{equation}\label{eq-step-general}
\begin{aligned}
\frac{nx_s-1}{x_s^2}-\frac{n-\epsilon}{x_0}
&\geq\frac{nx_k-1}{x_k^2}-\frac{n-\epsilon}{x_0}
=\frac1\delta>0,\\
x_{s-1}\frac{n-\epsilon}{x_0}
&\geq n-\epsilon\geq\frac n2.
\end{aligned}
\end{equation}

For $1\leq i\leq l$ and $0\leq s\leq k_i$, set
\[
a_{i,s}:=(x_{k_1},\ldots,x_{k_{i-1}},x_s,
\underbrace{x_0,\ldots,x_0}_{l-i}).
\]
Then we prove inductively that
\begin{equation}\label{eq-invariant-general}
a_{i,s}\in\mathrm{Ab}_{Q_{i,s}^-},\qquad
\Phi_{Q_{i,s}^-}^j(a_{i,s})\leq\frac{n-\epsilon}{x_0}
\quad(j\in[l]).
\end{equation}
At $a_{1,0}=(x_0,\ldots,x_0)$, the same argument as in Section \ref{sec-k=1} gives $a_{1,0}\in \mathrm{Ab}_{Q_{1,0}^-}$ and
\[
\Phi_{Q_{1,0}^-}^j(a_{1,0})
=\frac{n-\operatorname{Tr}(A_j)}{x_0}\leq\frac{n-\epsilon}{x_0}.
\]
Suppose \eqref{eq-invariant-general} holds at $(i,s-1)$, and write
\[
p=Q_{i,s-1}^-,\qquad a=a_{i,s-1},\qquad b=a_{i,s}=a+\delta e_i.
\]

By \eqref{eq-step-general}, Lemma \ref{barrier-i=j} applies with $\beta=(n-\epsilon)/x_0$. It gives $b\in\mathrm{Ab}_{Q_{i,s}^-}$ and establishes
\eqref{eq-invariant-general} for $j=i$. 

For $j\neq i$, differentiating \eqref{eq-flip-general} gives
$$\Phi_{Q_{i,s}^-}^j(b)=\Phi_p^j(b)-\frac{\partial_i\Phi_p^j(b)}{\frac{nx_s-1}{x_s^2}-\Phi_p^i(b)}\leq\Phi_p^j(b)-\delta\partial_i\Phi_p^j(b)\leq\Phi_p^j(a)\leq\frac{n-\epsilon}{x_0},$$
where the first inequality uses \eqref{eq-step-general} and monotonicity of the barrier functions, and the second uses convexity in direction $i$. This proves the induction step. Since $a_{i,k_i}=a_{i+1,0}$ and $Q_{i,k_i}^-=Q_{i+1,0}^-$ for $i<l$, the induction covers all groups.

As a consequence of \eqref{eq-invariant-general}, we have $(x_{k_1},\ldots,x_{k_l})\in\mathrm{Ab}_{Q_{l,k_l}^-}$. As $x_k\geq x_{k_i}$ for every $i$, it follows that
$$(x_k,\ldots,x_k)\in\mathrm{Ab}_{Q_{l,k_l}^-}.$$
Finally the identity
\[
Q_{l,k_l}^-(x,\ldots,x)=(-1)^Kx^{nl}\mu(x^{-1})
\]
implies that the smallest positive root of $\mu$ is at least $1/x_k$.

It remains to show that $\mu$ has exactly $K=\sum k_i$ positive roots, counted with multiplicity. Let $D$ denote this number. The coefficient argument in Section \ref{sec-k=1} shows that $D$ equals the maximum rank of $\sum_{i,j}v_{ij}v_{ij}^*$ over all realizations in the support. In particular, $D\leq K$.

For the reverse inequality $D\geq K$, we use the special case of Rado's independent transversal theorem \cite[Theorem 1]{Rado42} that is originally stated on matroid. For readers' sake, we give a proof of this special case in the Appendix. See Lemma \ref{independent-transversal} below.

To apply Lemma \ref{independent-transversal}, note that the identity $u^*A_i u=\mathbb E|u^*v_{ij}|^2$ implies
\[
\operatorname{span}\supp(v_{ij})=\operatorname{Range}(A_i).
\]
For any subset $J\subseteq\{(i,j):1\leq i\leq l,\ 1\leq j\leq k_i\}$, set
\[
I:=\{i:(i,j)\in J\text{ for some }j\}.
\]
Since $0\leq \sum_{i\in I}A_i\leq I_d$ and $\epsilon>2\sqrt{nk}-k>k$, we have
$$\dim\operatorname{span}\left(\bigcup_{(i,j)\in J}\supp(v_{ij})\right)=\rank \left(\sum_{i\in I}A_i\right)\geq\operatorname{Tr}\left(\sum_{i\in I}A_i\right)\geq |I|\epsilon\geq\sum_{i\in I}k_i\geq |J|.$$
Thus Lemma \ref{independent-transversal} provides linearly independent choices from all the supports $\supp(v_{ij})$. By independence and finite support, $\rank\left(\sum_{i,j}v_{ij}v_{ij}^*\right)=K$ with positive probability, so $D=K$.

Finally, Lemmas \ref{interlacing-poly} and \ref{interlacing-root} yield a realization for which
\[
\lambda_K\left(\sum_{i,j}v_{ij}v_{ij}^*\right)\geq\lambda_K(\mu)\geq\frac1{x_k}
=n-\left(\sqrt{k}+\sqrt{n-\epsilon}\right)^2,
\]
that completes the proof of Theorem \ref{main-thm}.
\end{proof}

\appendix
\section{Rado's independent transversal theorem}
\begin{lem}[a special case of Rado's Theorem from \cite{Rado42}]\label{independent-transversal}
Let \(V\) be a finite-dimensional vector space over $\mathbb{C}$, and let
\[
S_1,\ldots,S_m\subseteq V
\]
be sets of vectors. Suppose that
\begin{equation}
\dim\operatorname{span}\left(\bigcup_{a\in H}S_a\right)
\ge |H|
\qquad\text{for every }H\subseteq\{1,\ldots,m\}.
\label{eq:dimension-condition}
\end{equation}
Then there exist vectors \(w_a\in S_a\), \(1\le a\le m\),
such that \(w_1,\ldots,w_m\) are linearly independent.
\end{lem}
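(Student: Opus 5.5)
I will prove Lemma \ref{independent-transversal} by induction on $m$, following the classical proof of Hall's marriage theorem. The only additional ingredient is the submodularity of $H\mapsto r(H):=\dim\operatorname{span}\bigl(\bigcup_{a\in H}S_a\bigr)$, which holds because
\[
\dim(U+W)+\dim(U\cap W)=\dim U+\dim W.
\]
The case $m=1$ is immediate: $\dim\operatorname{span}S_1\ge1$, so $S_1$ contains a nonzero vector. For $m\ge2$ I split into two cases according to whether some nonempty proper subset $H$ is \emph{tight}, i.e.\ satisfies $r(H)=|H|$.

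\textbf{Case 1: no nonempty proper subset is tight.} Then $r(H)\ge|H|+1$ for every $\emptyset\neq H\subsetneq[m]$. Pick any nonzero $w_m\in S_m$, which exists since $r(\{m\})\ge1$. Let $\pi\colon V\to V/\langle w_m\rangle$ be the quotient map and put $S_a'=\pi(S_a)$ for $a<m$. For $H\subseteq[m-1]$, projecting loses at most one dimension, so
\[
\dim\operatorname{span}\bigcup_{a\in H}S'_a\ge r(H)-1\ge|H|.
\]
For $H\neq\emptyset$ this uses the strict inequality, and $H=\emptyset$ is trivial. By induction we obtain $w_a\in S_a$, $a<m$, whose images are independent in the quotient. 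Hence $w_1,\dots,w_m$ are independent in $V$.

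\textbf{Case 2: some nonempty proper $T$ is tight.} Apply induction to $\{S_a\}_{a\in T}$, which satisfies the hypothesis as a subfamily, to get independent $w_a\in S_a$, $a\in T$. These span a $|T|$-dimensional subspace of $U:=\operatorname{span}\bigcup_{a\in T}S_a$, which has dimension $|T|$, so they span all of $U$. Now pass to the quotient $\pi\colon V\to V/U$ and apply induction to $\{\pi(S_a)\}_{a\notin T}$. For $H\subseteq[m]\setminus T$,
\[
\dim\operatorname{span}\bigcup_{a\in H}\pi(S_a)=r(H\cup T)-|T|\ge|H|.
\]
This gives $w_a\in S_a$, $a\notin T$, with independent images modulo $U$. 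Combined with the basis $\{w_a\}_{a\in T}$ of $U$, all $m$ vectors are independent.

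The point needing care is Case 2's dimension count: $\dim(U+W_H)-\dim U$ with $W_H=\operatorname{span}\bigcup_{a\in H}S_a$ equals $r(H\cup T)-|T|$, and then the hypothesis is applied to $H\cup T$. No submodularity beyond this identity is actually required, so the main obstacle is simply setting up the quotient correctly. Everything else is routine linear algebra over $\mathbb C$; the argument works over any field.
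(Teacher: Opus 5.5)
Your proof is correct and follows essentially the same route as the paper: induction on $m$, splitting on whether some nonempty proper subset is tight, choosing a basis of its span and passing to the quotient in the tight case, and otherwise quotienting by a single nonzero $w_m\in S_m$. The submodularity you announce at the outset is never actually needed, as you yourself note at the end.
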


\begin{proof}
We argue by induction on $m$. The case $m=1$ is trivial.

For $m\geq 2$, suppose first that for some nonempty proper subset $H\subsetneq[m]$, the space $W:=\operatorname{span}(\bigcup_{a\in H}S_a)$ has dimension $|H|$. By induction, we can choose $w_a\in S_a$, $a\in H$, forming a basis of $W$. Let $\pi:V\to V/W$ be the quotient map. Then, for every $J\subseteq[m]\setminus H$,
\[
\dim\operatorname{span}\left(\bigcup_{a\in J}\pi(S_a)\right)
=\dim\operatorname{span}\left(\bigcup_{a\in J\cup H}S_a\right)-|H|
\geq |J|.
\]
By the induction hypothesis in $V/W$, we can choose $w_a\in S_a$, $a\notin H$, whose images are linearly independent. Together with the chosen basis of $W$, these vectors form the required linearly independent family.

If, otherwise, i.e.
$$\dim\operatorname{span}\left(\bigcup_{a\in J}S_a\right)\geq |J|+1$$
for every nonempty proper subset $J\subsetneq[m]$. we choose an arbitrary nonzero $w_m\in S_m$, and let $\pi:V\to V/\operatorname{span}\{w_m\}$ be the quotient map. Then, for every nonempty $J\subseteq[m-1]$,
\[
\dim\operatorname{span}\left(\bigcup_{a\in J}\pi(S_a)\right)
\geq\dim\operatorname{span}\left(\bigcup_{a\in J}S_a\right)-1
\geq |J|.
\]
The same condition is automatic for $J=\varnothing$. By induction, we can choose $w_a\in S_a$, $a<m$, whose images are linearly independent. Adjoining $w_m$ gives the required family.
\end{proof}

\bibliographystyle{abbrv}
\bibliography{mybibtex}

\end{document}